\documentclass[11pt]{article}
\usepackage{latexsym,amsfonts,amsthm,amsmath,amscd,amssymb,color,url}
\usepackage{graphicx}

\newtheorem{lemma}{Lemma}[section]
\newtheorem{thm}[lemma]{Theorem}
\newtheorem{rem}[lemma]{Remark}
\newtheorem{prop}[lemma]{Proposition}

\marginparwidth=3cm \reversemarginpar

\newcommand{\dimo}[1]{\vspace{2pt}\noindent\textit{Proof of \ref{#1}}.\ }
\newcommand{\finedimo}{{\hfill\hbox{$\square$}\vspace{2pt}}}

\newcommand\matZ{{\mathbb{Z}}}

\renewcommand{\hbar}{{\overline{h}}}

\newfont{\Got}{eufm10 scaled 1200}

\newcommand{\compo}{\,{\scriptstyle\circ}\,}
\newcommand{\mycap} [1] {\caption{\footnotesize{#1}}}

\newcommand\calF{{\mathcal F}}

\newcommand{\figfatta}[3]{\centerline{#2}\mycap{#3}}

\newcommand{\intsigma}{\textrm{int}(\sigma)}

\begin{document}

\title{Diagrams of links and bands\\ on 3-manifold spines and flow-spines}

\author{Carlo~\textsc{Petronio}\thanks{Partially supported by INdAM through GNSAGA and by MUR (Italia) through the PRIN 2022 project ``Geometry and topology of manifolds''}}

\maketitle

\begin{abstract}\noindent
The Reidemeister theorem states that any link in $3$-space can be encoded by a diagram (a suitably decorated projection) on a plane,
and provides a finite set of combinatorial moves relating two diagrams of the same link up to isotopy.
In this note we replace $3$-space by any 3-manifold $M$ 
and we extend the Reidemeister theorem (definition of the decoration and description of the combinatorial moves) in four situations,
taking diagrams either of links or of bands (collections of cylinders and M\"obius strips),
either on an almost special spine or on a flow-spine of $M$.
This partially reproves and extends a result of  Brand, Burton, Dancso, He, Jackson and Licata.

\smallskip

\noindent MSC (2020): 57K10 (primary), 57K30 (secondary).
\end{abstract}

\newcommand{\Flows}{\textsf{Flows}}
\newcommand{\Genflows}{\textsf{Flows}^0}
\newcommand{\Curves}{\textsf{Curves}(\Sigma)}
\newcommand{\Gencurves}{\textsf{Curves}(\Sigma,F)^0}
\newcommand{\Pairs}{\textsf{Pairs}}
\newcommand{\Genpairs}{\textsf{Pairs}^0}

\section*{Introduction}

In the recent paper~\cite{Ben:et:al}, Brand, Burton, Dancso, He, Jackson and Licata, extending ideas of 
Cromwell~\cite{Cromwell} and Dynnikov~\cite{Dynnikov99, Dynnikov06}, 
established a theory of crossingless diagrams of links on a 
\emph{trivalent} spine $\Sigma$ of a $3$-manifold $M$ (a slight generalization of Matveev's notion of \emph{special} spine~\cite{Matveev:book}).
In particular, after noting that any link has such a diagram, they proved the remarkable fact that there exist
local combinatorial moves relating two such diagrams of the same link up to isotopy. Even if the main
focus of~\cite{Ben:et:al} is on crossingless diagrams, its starting point is a
Reidemeister-type theorem for generic diagrams, namely the description of local combinatorial
moves relating two generic diagrams of the same link up to isotopy, which
extends the old foundational work
of Reidemeister~\cite{Reidemeister} and Alexander-Briggs~\cite{AlexanderBriggs}. 
In this note we reprove this result providing some details not fully made explicit in~\cite{Ben:et:al}, 
and we extend the theory in three directions.

\smallskip

First, we
enrich the structure of a diagram $D$ on $\Sigma$ of a link $L$ so to encode a band $B$ (a union of cylinders and M\"obius strips) having
$L$ as a core. Next, we consider the case where the spine $\Sigma$ has the extra structure of a flow-spine
(one that carries on $M$ a generic vector field with traversing orbits and concave tangency to the boundary), and we
define link diagrams on such a $\Sigma$. Last, we define band-diagrams on flow-spines. 
In all three cases we again provide finitely many local combinatorial moves translating at the level of diagrams the notion
of isotopy in $M$ of the corresponding objects. For the case where $M$ is an integer homology sphere,
we also specialize the result for band-diagrams on flow-spines to one for framed links, showing an analogue of what is known
for spherical diagrams of framed links in 3-space.

\smallskip

Links and their diagrams have been for a long time, and still are, central objects in topology; we address the reader again to~\cite{Ben:et:al}
for some recent references. Bands consisting only of cylinders can also be described as framed links, 
which are at the root of the theory of Dehn
surgery and of the Lickorish-Wallace theorem~\cite{Lick62, Wallace}, one of the most important tools at the basis of a variety 
of results in the theory of $3$-manifolds, see for instance~\cite{Eudaveetal19,
Futeretal22,
Lack19,
Mallick24,
Martelli21,
Savk2024}
and the references quoted therein for recent developments.
Diagrams of framed links are the object of Kirby calculus~\cite{Kirby78, FennRourke}, another fundamental result on which,
for instance, the formalization of Witten's~\cite{Witten}
quantum invariants of $3$-manifolds is founded~\cite{KL, RT}.
The Kirby calculus itself was also the object of rather recent refinements~\cite{Habiroetal17,
Martelli12}. For all these reasons we believe that the theories of link and band diagrams
developed in this paper might be the basis of further developments. We remark
however that bands that are not framed links do not seem to have attracted much attention.
It appears that M\"obius strips in $3$-manifolds have only been considered when
\emph{properly} embedded or immersed (see the foundational~\cite{Cannonetal76}
and the recent~\cite{Hughesetal20}).


\section{Link diagrams on almost special spines}\label{lins:section}
In this section we review the theory of almost special, trivalent and special spines of $3$-manifolds,
we define the diagram of a link on such a spine and we provide moves on diagrams embodying 
the notion of link isotopy.

\paragraph{Almost special, trivalent and special spines}
For the whole section we fix a compact and connected 3-dimensional manifold $M$ with non-empty boundary. Note that, if $N$ is a closed $3$-manifold,
taking $M$ to be $N$ minus an open ball we have that $M$ is well-defined, and the theory of links up to isotopy in $N$ is equivalent to that in $M$.
Following Matveev's theory of complexity (see~\cite{Matveev:book}) 
we define a $2$-dimensional polyhedron $\Sigma$ to be:
\begin{itemize}

\item \emph{Almost special}, if every point of $\Sigma$ has a neighbourhood of one of the types (2), (1) or (0) shown in 
Fig.~\ref{local-spine:fig} --- and $\Sigma^{(j)}$ will denote the subset of $\Sigma$ of points of type ($j$);
\begin{figure}
\figfatta{local-spine}
{\includegraphics[scale=0.5]{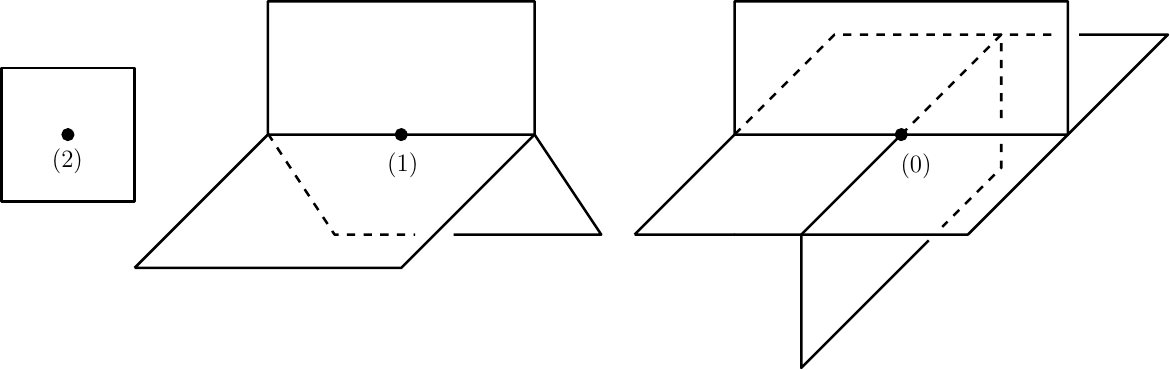}}
{Types of points in an almost special polyhedron.\label{local-spine:fig}}
\end{figure}

\item \emph{Special}, if it is almost special, $\Sigma^{(0)}$ is non-empty and the components of $\Sigma^{(2)}$ are open discs;

\item A \emph{spine} of $M$ if $\Sigma$ is tamely embedded in $M$ so that $M$ is a regular neighbourhood of $\Sigma$.
\end{itemize}

In~\cite{Ben:et:al} link diagrams are drawn on a polyhedron $\Sigma$ defined to be \emph{trivalent}, 
namely almost special with non-empty $\Sigma^{(1)}$. Note that a special polyhedron is trivalent, but in
this note we will only assume $\Sigma$ to be almost special. If  $\Sigma$ is an almost special spine of $M$ then there is a natural 
projection $\rho$ of $M$ onto $\Sigma$, as illustrated in Fig.~\ref{spine-projection:fig}
\begin{figure}
\figfatta{spine-projection}
{\includegraphics[scale=0.6]{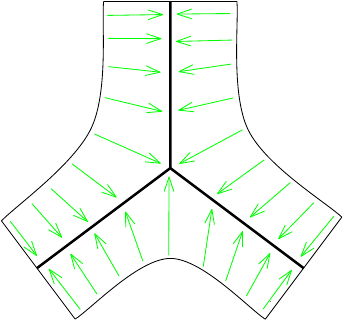}}
{The projection $\rho$ of $M$ onto an almost special spine $\Sigma$.\label{spine-projection:fig}}
\end{figure}
on a cross-section of a component of $\Sigma^{(1)}$ (the reader may easily visualize how the projection works near the points of $\Sigma^{(0)}$, called \emph{vertices}).
Conversely, $M$ can be viewed as a thickening of $\Sigma$, but (see~\cite{Matveev:book}):
\begin{itemize}
\item Not all almost special (or even special) polyhedra are thickenable;
\item The thickening (if any) of a special $\Sigma$ is uniquely defined, 
but this fails to be true if $\Sigma$ is only almost special, or even trivalent.
\end{itemize}

\bigskip

Until further notice, $\Sigma$ will be an almost special spine of $M$. 
We endow the components of $\Sigma^{(1)}$ (called \emph{edges}, even if some of them may be circles)
with a smooth structure, and the components of $\Sigma^{(2)}$ (called \emph{faces}) 
with a smooth structure compatible with that of $\Sigma^{(1)}$, so each face will be a 
smooth surface with boundary and corners at the vertices of $\Sigma$, possibly with self-adjacencies.

\paragraph{Link diagrams}
We will call \emph{link diagram} on $\Sigma$ a decorated subset $D$ of $\Sigma$ such that, for each face $\sigma$ of $\Sigma$:
\begin{itemize}
\item[(1)] $D\cap \sigma$ is a union of smooth immersed arcs and circles; 
\item[(2)] These arcs and circles cross themselves or each other transversely, in $\intsigma$, and at double points only;
\item[(3)] The circles lie in $\intsigma$ and the arcs meet $\partial \sigma$ transversely, at their endpoints, and not at the vertices of $\sigma$;
\end{itemize}
Moreover:
\begin{itemize}
\item Each point of $D\cap\Sigma^{(1)}$ on an edge $e$ of $\Sigma$ 
is the endpoint of precisely two portions of arc lying in locally different folds\footnote{These folds might globally belong to the same face of $\Sigma$,
so the two portions of arc might also be the endpoints of one and the same arc.}
of $\Sigma^{(2)}$ incident to $e$;
\item At each double point of $D$, on a face $\sigma$, the decoration consists of: 
\begin{itemize}
\item The choice of one of the two strands going through the double point (shown by a solid line in the figures, while the other strand is shown broken);
\item A transverse orientation for $\sigma$ in $M$ at the double point (shown by a thin arrow in the figures, where the convention is that locally $M$ is a 
neighbourhood of the portion of $\Sigma$ shown in $3$-space);
\end{itemize}
\item A decorated double point (called \emph{crossing})
is viewed to be the same if in the decoration both the choice of the strand and the transverse orientation are reversed, see
Fig.~\ref{crossing-dec-smooth:fig}.
\begin{figure}
\figfatta{crossing-dec-smooth}
{\includegraphics[scale=0.6]{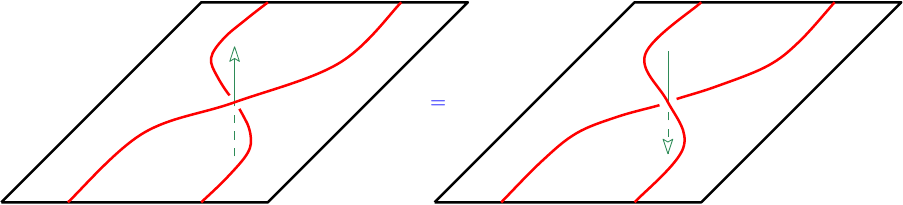}}
{Two equivalent decorations of a crossing.\label{crossing-dec-smooth:fig}}
\end{figure}
\end{itemize}

A link diagram $D$ on $\Sigma$ defines a link $L(D)$ in $M$ which coincides with $D$ except at the crossings, where the chosen strand is slightly pushed outside $\Sigma$
in the direction indicated by the transverse orientation. Of course $L(D)$ is well-defined up to isotopy.
Conversely, if $L$ is a link in $M$, up to small perturbation we can assume that the projection
$\rho:M\to\Sigma$ is generic when restricted to $L$, which implies that $\rho(L)$ can be decorated to become a link diagram $D$ such that $L(D)$ is isotopic to $L$.

\paragraph{Convention on moves}
In the sequel we will describe many combinatorial moves for link diagrams on $\Sigma$. Unlike~\cite{Ben:et:al} 
we will make the following convention: suppose that a move $\mu:D_0\to D_1$ is described  by local portions of diagrams $D_0$ and $D_1$ 
drawn in $3$-space, and that there is a symmetry $f$ of $3$-space such that $f(D_0)=D_0$; then the same symbol $\mu$
also encodes the move $D_0\to f(D_1)$. As examples of this convention, we show in Fig.~\ref{R1-move-smooth:fig}
\begin{figure}
\figfatta{R1-move-smooth}
{\includegraphics[scale=0.6]{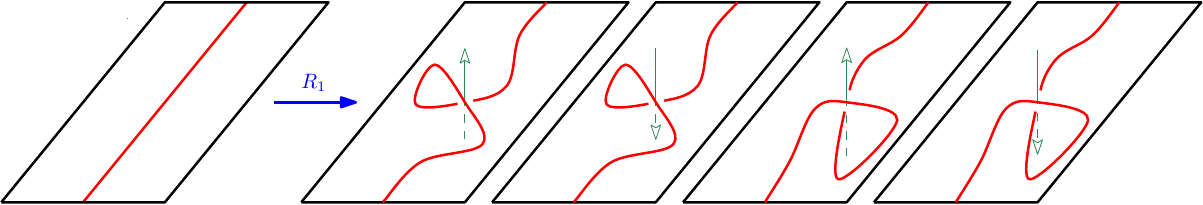}}
{These four versions of the Reidemeister move $R_1$ are viewed as the same move.\label{R1-move-smooth:fig}}
\end{figure}
the classical Reidemeister move $R_1$, and in Fig.~\ref{F2-move-smooth:fig}
\begin{figure}
\figfatta{F2-move-smooth}
{\includegraphics[scale=0.6]{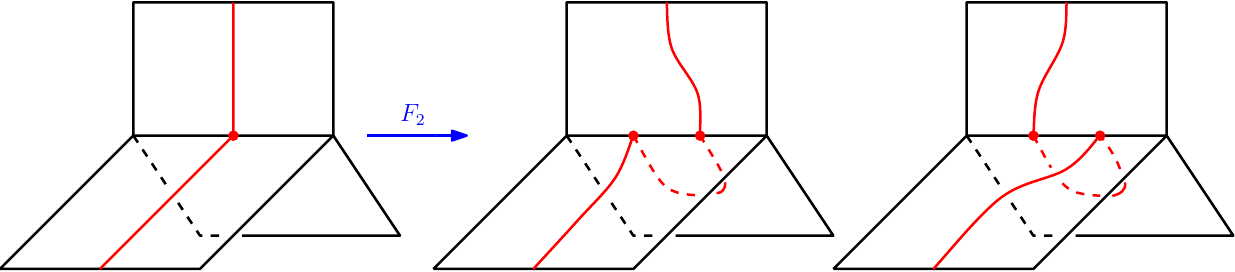}}
{These versions of the $F_2$ move are viewed as the same move.\label{F2-move-smooth:fig}}
\end{figure}
another move $F_2$ needed below and also used in~\cite{Ben:et:al} (where the two version are viewed as distinct and named FR and FL).

\paragraph{Moves relating diagrams of isotopic links}
In this paragraph we prove the following result:

\begin{thm}\label{main:link:thm}
Links in $M$ up to isotopy correspond bijectively to link diagrams on $\Sigma$ up to isotopy on $\Sigma$, 
the Reidemeister moves $R_1$, $R_2$ and $R_3$ away from $\Sigma^{(1)}$
and the moves $C$, $F_1$, $F_2$ and $V$ 
of Figg.~\ref{F2-move-smooth:fig} and~\ref{moves-C-F1-V:fig}.
\begin{figure}
\figfatta{moves-C-F1-V}
{\includegraphics[scale=0.6]{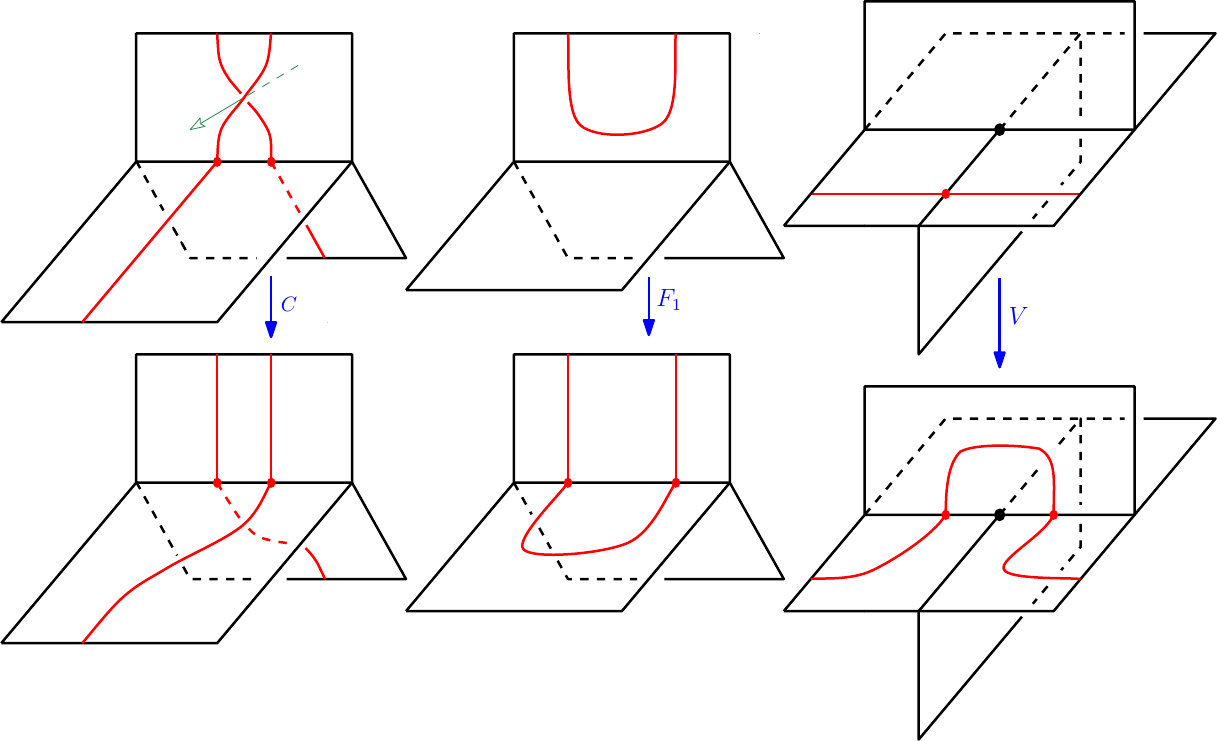}}
{The moves $C$, $F_1$ and $V$.\label{moves-C-F1-V:fig}}
\end{figure}
\end{thm}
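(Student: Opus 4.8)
The plan is to establish the correspondence in both directions and then reduce the isotopy-invariance statement to a generic-position argument followed by a case analysis of the singularities one encounters along a generic isotopy. I would first recall that the constructions $D\mapsto L(D)$ and $L\mapsto D$ described just before the theorem give well-defined maps between link diagrams (up to diagram isotopy) and links (up to isotopy in $M$); the construction of $D$ from $L$ via the projection $\rho$ already shows surjectivity, so the content of the theorem is that two diagrams $D_0,D_1$ satisfy $L(D_0)\simeq L(D_1)$ in $M$ \emph{if and only if} $D_0$ and $D_1$ are related by a finite sequence of the listed moves together with isotopy on $\Sigma$. The ``if'' direction is the easy half: for each move $\mu$ I would check directly that $L(D_0)$ and $L(\mu(D_0))$ are isotopic in $M$, by exhibiting a local isotopy supported in the thickening of the relevant portion of $\Sigma$. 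For $R_1,R_2,R_3$ this is the classical computation performed inside a thickened face (a ball), while for $C,F_1,F_2,V$ one checks the local picture near an edge or a vertex of $\Sigma^{(1)}$.

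For the harder ``only if'' direction the plan is to promote an arbitrary isotopy $L_t$ from $L_0=L(D_0)$ to $L_1=L(D_1)$ to a generic one and read off the induced combinatorial history on the projected diagrams $D_t=\rho(L_t)$. Concretely, I would consider the family of maps $\rho\circ L_t$ and perturb the isotopy (rel endpoints $t=0,1$) so that it is in general position with respect to the stratification of $\Sigma$ by $\Sigma^{(2)},\Sigma^{(1)},\Sigma^{(0)}$ and with respect to the projection $\rho$. By standard transversality/multijet arguments in a one-parameter family, the diagram $D_t$ stays a legitimate link diagram for all but finitely many $t$, and at the exceptional parameters it undergoes exactly one codimension-one degeneration. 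The core of the proof is then the claim that the complete list of codimension-one degenerations is precisely the moves in the statement: those degenerations that occur in the interior of a face reproduce the Reidemeister moves $R_1,R_2,R_3$ (a strand acquiring a vertical tangency giving $R_1$, two strands becoming tangent giving $R_2$, a triple point giving $R_3$), while the degenerations in which an arc endpoint interacts with $\Sigma^{(1)}$ give $F_1$ and $F_2$, an interaction with a crossing sliding across an edge gives $C$, and the passage of an endpoint through a vertex of $\Sigma^{(0)}$ gives $V$.

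The main obstacle, and the part requiring the most care, is the local analysis near $\Sigma^{(1)}$ and especially near the vertices $\Sigma^{(0)}$, where the projection $\rho$ is not a submersion in the naive sense and the ambient manifold structure near a triple edge or a vertex (Fig.~\ref{local-spine:fig}, types (1) and (0)) must be used to enumerate genuinely which transitions can occur. Here I would set up a normal-form model for $\rho$ near each stratum (using the explicit pictures of the local structure of an almost special spine), list the possible ways a one-parameter family of arcs can cross the stratum, and verify that each such crossing is realized, up to the symmetry convention on moves adopted in the ``Convention on moves'' paragraph, by one of $C,F_1,F_2,V$. A subtlety to handle is that $\Sigma$ is only assumed almost special (not special or even trivalent), so the argument must not presuppose that faces are discs or that $\Sigma^{(1)}$ is nonempty; the local models, being genuinely local, are unaffected by this, but one must be careful that circle-arcs and self-adjacent faces (as flagged in the footnote) are covered by the same local moves.

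Finally, I would assemble these pieces: genericity reduces the isotopy to a finite concatenation of elementary transitions, the local classification identifies each transition with a listed move (or with an isotopy on $\Sigma$ when the transition is degenerate-free), and the ``if'' direction guarantees that each move preserves the isotopy class of $L(D)$, so that the two equivalence relations coincide. I expect the bookkeeping at the vertices to be where most of the genuine work lies, and I would organize that case analysis around the transverse-orientation and over/under decoration data so that the two equivalent decorations of a crossing (Fig.~\ref{crossing-dec-smooth:fig}) are accounted for without double-counting moves.
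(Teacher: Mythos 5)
Your overall strategy (generic position for the one-parameter family, classification of codimension-one catastrophes, identification of each catastrophe with a move) is the same as the paper's. But there is a genuine gap at the heart of your plan: you assert that ``the complete list of codimension-one degenerations is precisely the moves in the statement,'' and this is false. The catastrophe analysis yields a strictly larger set of moves than $\{R_1,R_2,R_3,C,F_1,F_2,V\}$. Concretely: when two arcs of the projection cross transversely at a point of $\Sigma^{(1)}$, one must distinguish whether their companion arcs lie in the same adjacent face or in different ones; the first case produces the move $S$ of Fig.~\ref{S-move-generated:fig} (a crossing sliding past an edge) and a second variant (Fig.~\ref{move2312generated:fig}), neither of which is in the statement, while only the second case produces $C$. (You in fact misattribute this: you say a crossing sliding across an edge gives $C$, but that configuration gives $S$; the move $C$ comes from the ``clearing'' catastrophe where the two strands separate into different faces.) Similarly, the passage of an arc endpoint through a vertex splits into several combinatorially distinct cases according to which local faces the diagram touches before and after, and only one of these is the move $V$; the others are the moves of Figg.~\ref{move3312generated:fig},~\ref{move3321generated:fig} and~\ref{move3322:fig}.

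The missing step — and it is the main content of the proof beyond the enumeration — is to show that each of these extra moves is \emph{generated} by the moves in the statement (this is what Figg.~\ref{S-move-generated:fig},~\ref{move2312generated:fig},~\ref{move3312generated:fig} and~\ref{move3321generated:fig} accomplish). Note that these extra vertex catastrophes cannot be dismissed by claiming the link can be isotoped off $\Sigma$: the paper exhibits $M$, $\Sigma$ and a knot $K$ for which no such isotopy exists, which is precisely why the generation arguments are needed. A complete write-up along your lines must therefore (i) enumerate \emph{all} the catastrophes at edges and vertices, including the ones not matching a listed move, and (ii) supply explicit move sequences realizing each of them from $R_1,R_2,R_3,C,F_1,F_2,V$. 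One smaller point: in case (3.2) you should also argue that the a priori possible outcome in which the new connecting arc falls back into the original face is not a generic catastrophe, so that only $F_2$ survives.
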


While $R_1$ was shown in Fig.~\ref{R1-move-smooth:fig}, we do not depict $R_2$ and $R_3$, but we make the convention to give $R_2$ the direction in which the
number of vertices increases by $2$ (for $R_3$ the direction is immaterial).

\dimo{main:link:thm}
Of course every link has a diagram and the listed moves preserve the isotopy class of the link of a diagram.
So we must show that if $D_0$ and $D_1$ are link diagrams on $\Sigma$ and there exists an isotopy 
$(L_t)_{t\in[0,1]}$ between $L_0=L(D_0)$ and $L_1=L(D_1)$  then $D_0$ and $D_1$ are related by one of the listed moves.
Making $(L_t)_{t\in[0,1]}$ generic with respect to the projection $\rho:M\to\Sigma$,
there exist times $0<t_1<\ldots<t_N<1$ such that:
\begin{itemize}
\item For $t\neq t_j$ the projection
$\rho(L_t)$ can be turned into a diagram $D_t$ such that $L(D_t)$ is isotopic to $L_t$;
\item As $t$ varies in $[0,t_1)$, in $(t_N,1]$ and in each $(t_{j-1},t_{j})$, the diagram $D_t$ evolves by isotopy on $\Sigma$;
\item At $t=t_j$ the projection $\rho(L_t)$ undergoes an elementary catastrophe preventing it to be a diagram.
\end{itemize}
We must then show that these elementary catastrophes, namely first-order violations of 
the genericity conditions in the definition of a diagram, translate into the listed moves.
Setting $P=\rho(L_{t_j})$, the catastrophes are as follows:
\begin{itemize}
\item[(1)] $P\cap \sigma$ is not immersed in some face $\sigma$ at a point of $\intsigma$;
\item[(2.1)] $P\cap \sigma$ has a non-transverse double point in $\intsigma$ for some face $\sigma$;
\item[(2.2)] $P\cap \sigma$ has a transverse triple point in $\intsigma$ for some face $\sigma$;
\item[(2.3)] For some face $\sigma$, two\footnote{As in footnote 1, 
rather than two arcs these may be the terminal portions of one and the same arc.}
arcs $a,\ b$ of $P\cap \sigma$ cross each other transversely at a point $x$ of $\partial \sigma$
which is not a vertex of $\sigma$, and $a,\ b$ are transverse to $\partial \sigma$ at $x$;
\item[(3.1)] For some face $\sigma$, an arc or circle of $P\cap \sigma$ is internally 
tangent to $\partial\sigma$, but not at a vertex of $\sigma$;
\item[(3.2)] For some face $\sigma$, an arc of $P\cap \sigma$ is tangent to $\partial \sigma$ at one of its endpoints, but not at a vertex of $\sigma$;
\item[(3.3)] For some face $\sigma$, an arc of $P\cap \sigma$ is incident to $\partial \sigma$ at a vertex of $\sigma$ and it is transverse to both the edges of $\sigma$ incident to that vertex.
\end{itemize}
Of course the catastrophes (1), (2.1) and (2.2) translate into the Reidemeister moves $R_1$, $R_2$ and $R_3$, respectively.
For the catastrophe (2.3), we have to make a distinction: each of $a,\ b$ has a companion sharing the same endpoint
in one of the faces incident to $\sigma$, and these two companions can be in the same face (case (2.3.1), Fig.~\ref{case23:fig}-left)
\begin{figure}
\figfatta{case23}
{\includegraphics[scale=0.6]{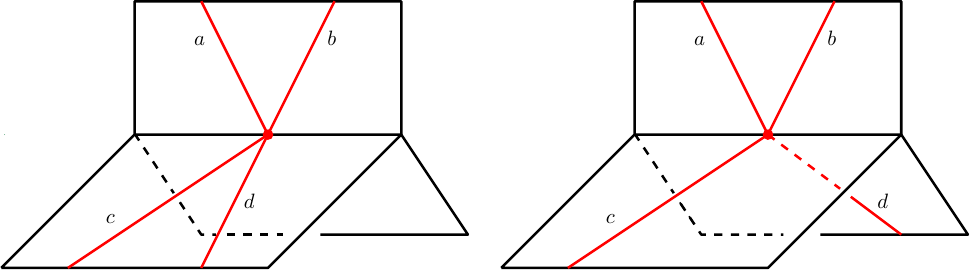}}
{Two local configurations for the catastrophe (2.3).\label{case23:fig}}
\end{figure}
or in different ones (case (2.3.2), Fig.~\ref{case23:fig}-right). Moreover, in case (2.3.1) we must further distinguish depending on whether 
the companions are $a$-$d$ and $b$-$c$, case (2.3.1.1) or $a$-$c$ and $b$-$d$, case (2.3.1.2).  Now case (2.3.1.1) is a catastrophe coming 
from a generic link isotopy precisely if it corresponds on diagrams to the move $S$ of Fig.~\ref{S-move-generated:fig}
\begin{figure}
\figfatta{S-move-generated}
{\includegraphics[scale=0.6]{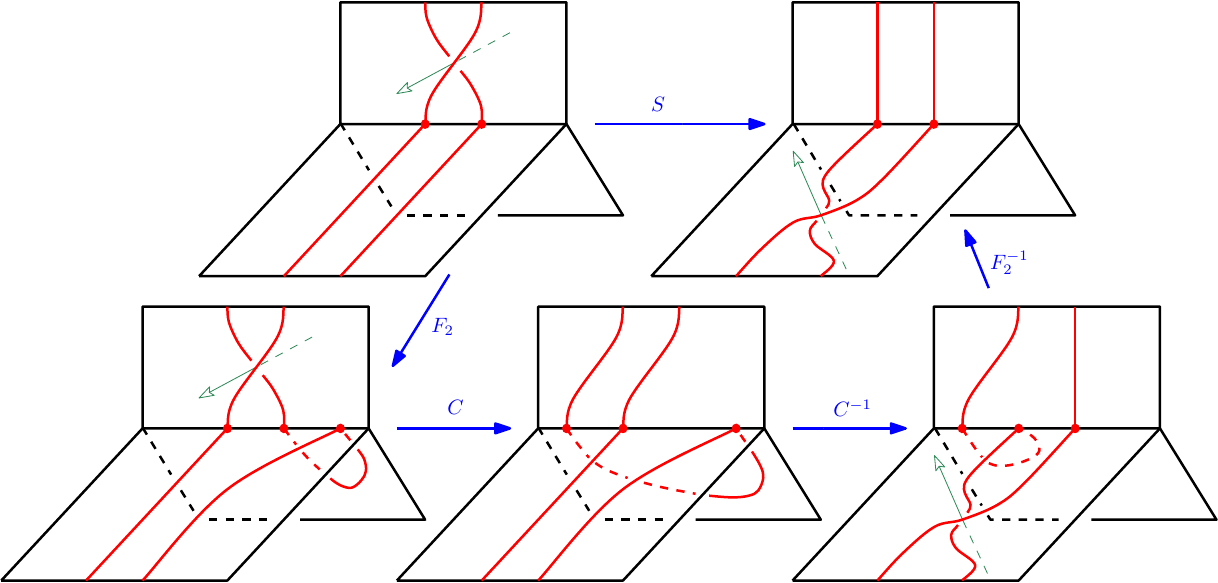}}
{The move $S$ and its realization via other moves.\label{S-move-generated:fig}}
\end{figure}
(where the letter $S$ evokes the \emph{sliding} of a crossing of the diagram past an edge of $\Sigma$).
However this move is shown already in Fig.~\ref{S-move-generated:fig} to be generated by the other ones in the statement, so it is not necessary.
Similarly, (2.3.1.2) gives the move of Fig.~\ref{move2312generated:fig}
\begin{figure}
\figfatta{move2312generated}
{\includegraphics[scale=0.6]{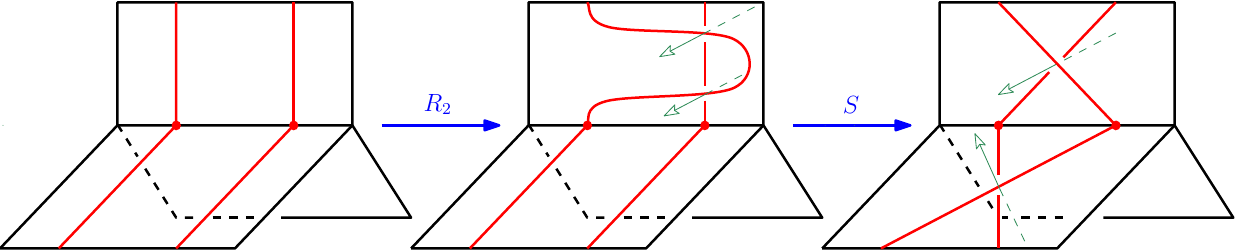}}
{The move coming from case (2.3.1.2) and its realization via other moves.\label{move2312generated:fig}}
\end{figure}
and the same figure shows it is also not necessary.

Turning to (2.3.2), we see that this case corresponds to a transition of projections as in  Fig.~\ref{pre-C-move-smooth:fig},
\begin{figure}
\figfatta{pre-C-move-smooth}
{\includegraphics[scale=0.6]{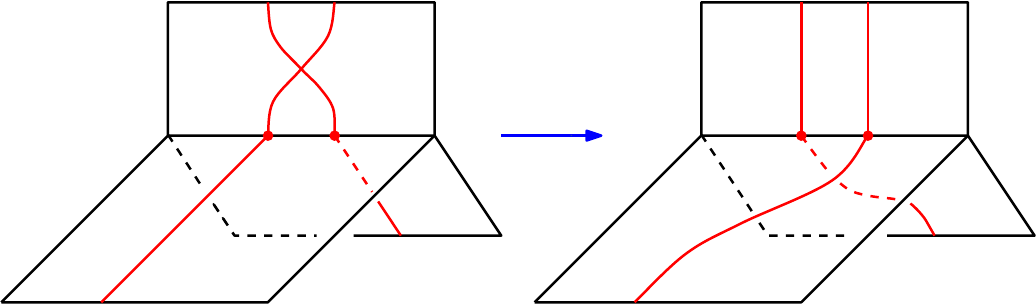}}
{The transition of projections giving the move $C$.\label{pre-C-move-smooth:fig}}
\end{figure}
and the only decoration of the double point for which this is coming from a link isotopy gives the move $C$ of the statement 
(where $C$ evokes the \emph{clearing} of a crossing).

Cases (3.1) and (3.2) are shown in Fig.~\ref{case3-1_and_2:fig}
\begin{figure}
\figfatta{case3-1_and_2}
{\includegraphics[scale=0.6]{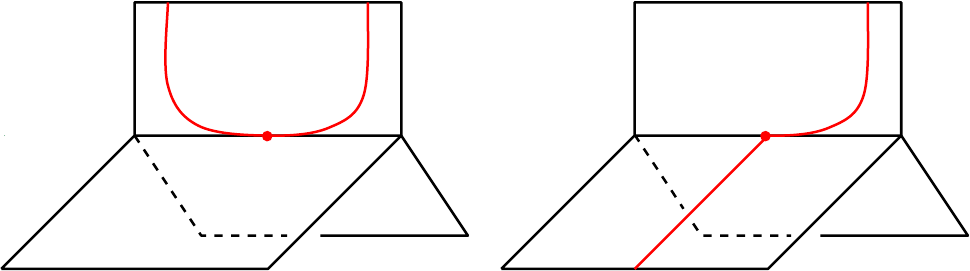}}
{Cases (3.1) and (3.2).\label{case3-1_and_2:fig}}
\end{figure}
and give rise respectively to the moves $F_1$ and $F_2$ of the statement
(where the letter $F$ evokes a \emph{finger} pushing a strand of link past an edge of $\Sigma$, as in ~\cite{Ben:et:al}). 
For case (3.2), note the following: before the catastophe, $P$ locally consists of two arcs contained in 
faces $\sigma,\sigma'$ of $\Sigma$ and transverse to $\partial\sigma,\ \partial\sigma'$; at the catastrophe
the second arc becomes tangent to $\partial\sigma'$; after the catastrophe the second arc is again transversal to $\partial\sigma'$ 
but the ends of the pre-existing arcs are distinct and joined by an arc that a priori can be contained
either in the third local face $\sigma'''$ or in $\sigma$.
The first case gives the move $F_2$, while the other one is actually not a generic catastrophe.

In case (3.3) again we need to make a distinction, because the arc in $\sigma$ that ends at a vertex of $\sigma$ must have a companion arc in one
of the other faces $\sigma'$ of $\Sigma$ incident to that vertex, and $\sigma'$ can be (locally) adjacent to $\sigma$ (case (3.3.1),  Fig.~\ref{case33-1_and_2:fig}-left)
\begin{figure}
\figfatta{case33-1_and_2}
{\includegraphics[scale=0.6]{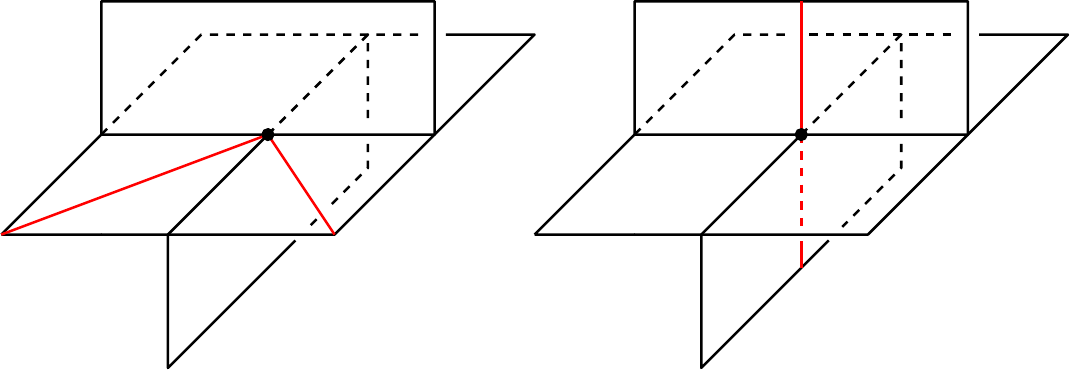}}
{Two local configurations for the catastrophe (3.3).\label{case33-1_and_2:fig}}
\end{figure}
or opposite to $\sigma$ (case (3.3.2),  Fig.~\ref{case33-1_and_2:fig}-right).
In case (3.3.1) we can assume that before the catastrophe locally the projection touches the interior of $\sigma$ and $\sigma'$ only,
and we must further distinguish according to whether after the catastrophe it touches the only other face of $\Sigma$ adjacent
to $\sigma$ and $\sigma'$ or the two faces of $\Sigma$ opposite to $\sigma$ and to $\sigma'$. This gives respectively the move $V$ of the statement
and the move of Fig.~\ref{move3312generated:fig},
\begin{figure}
\figfatta{move3312generated}
{\includegraphics[scale=0.6]{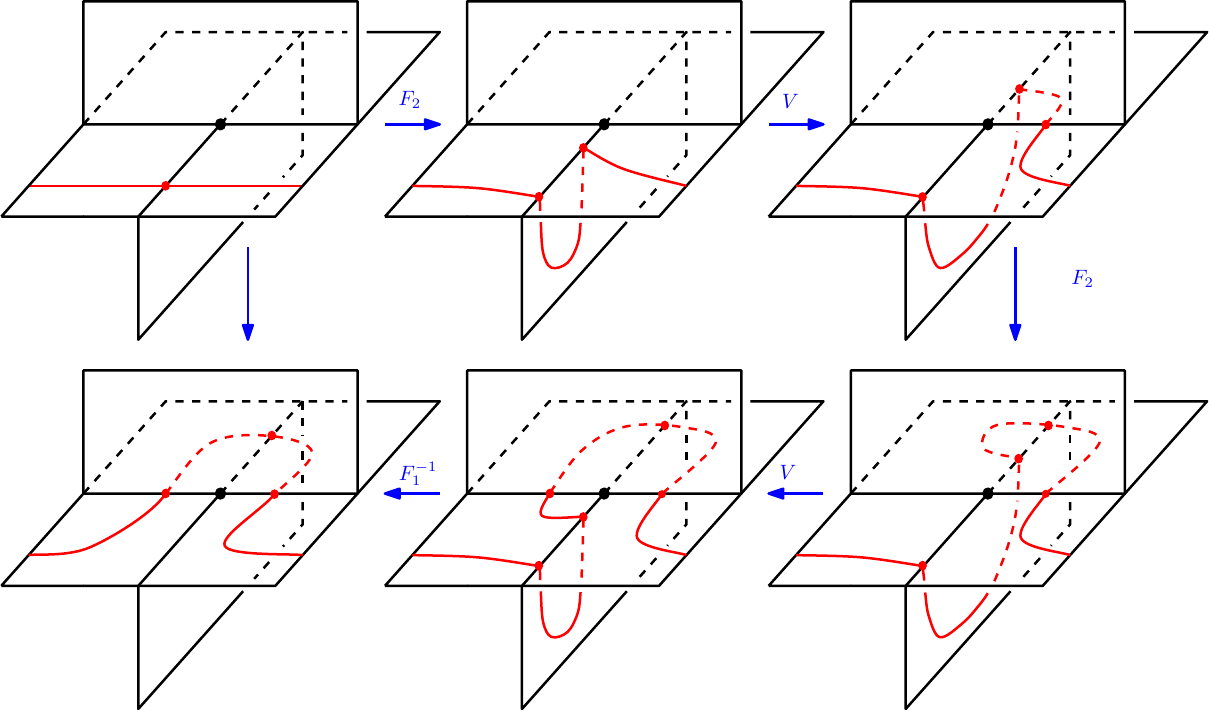}}
{A vertex move generated by other moves.\label{move3312generated:fig}}
\end{figure}
which is shown therein to be generated by the moves of the statement.
In case (3.3.2) the diagram will touch some faces $\sigma,\sigma',\sigma''$ before
the catastrophe and $\sigma,\sigma''',\sigma''$ after it, and we must distinguish according to whether $\sigma'$ and $\sigma'''$ are
adjacent, whence the move of Fig.~\ref{move3321generated:fig},
\begin{figure}
\figfatta{move3321generated}
{\includegraphics[scale=0.6]{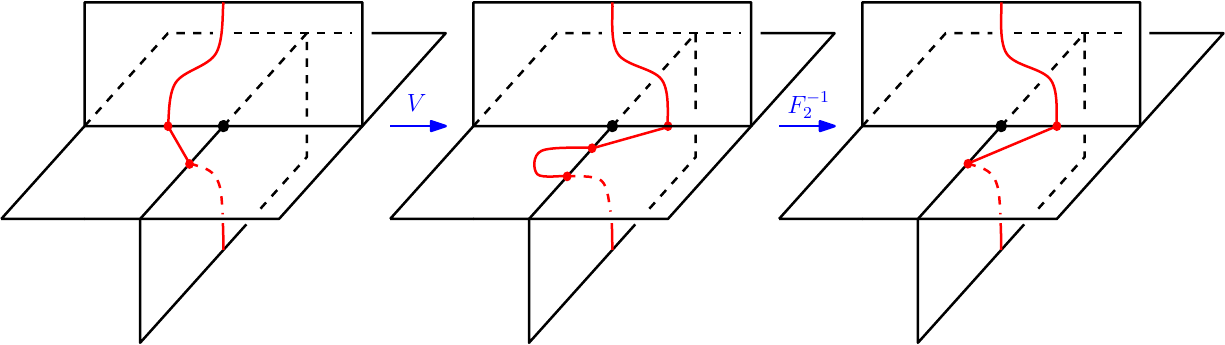}}
{Another vertex move generated by other moves.\label{move3321generated:fig}}
\end{figure}
shown therein to be generated by the moves of the statement, or opposite to each other, whence the move of 
Fig.~\ref{move3322:fig},
\begin{figure}
\figfatta{move3322}
{\includegraphics[scale=0.6]{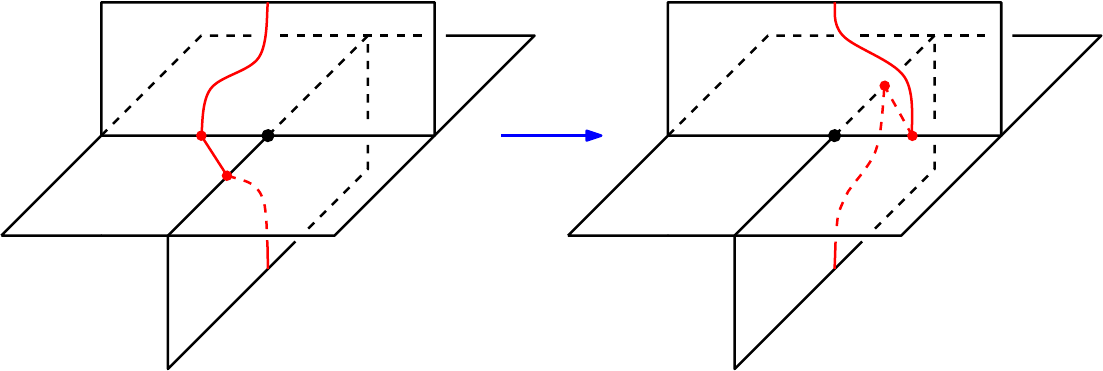}}
{One more vertex move.\label{move3322:fig}}
\end{figure}
which of course can be obtained by two moves as in Fig.~\ref{move3321generated:fig}.
This concludes the proof.
\finedimo

\paragraph{Alternative sets of moves}
Let us say that a set of moves for link diagrams on $\Sigma$ is \emph{complete} if it is sufficient to translate link isotopy.
Theorem~\ref{main:link:thm} states that $\{R_1$, $R_2$, $R_3$, $C$, $F_1$, $F_2$, $V\}$ is complete.
This set does not include the move $S$ of Fig.~\ref{S-move-generated:fig}, which may seem strange, since $S$ is the
most natural move coming from the interaction between diagram crossings and spine edges --- for sure, more natural than $C$.
So one might wonder whether $C$ can be replaced by $S$, but this is not the case:

\begin{rem}
\emph{The move $C$ is \emph{not} generated by $R_1$, $R_2$, $R_3$, $S$, $F_1$, $F_2$, $V$.
In fact, if there were a sequence of such moves generating $C$, this would apply also 
to the case where the crossing disappearing with $C$ involves two distinct components $K_1$ and $K_2$ of the link represented by the diagram.
But under these moves the parity of the number of crossings between $K_1$ and $K_2$ is preserved,
while this number can be odd if $\Sigma^{(1)}$ is non-empty.}
\end{rem}

We now remind that Theorem 3.2 in~\cite{Ben:et:al} states that 
$\{R_1$, $R_2, R_3$, $C_1$, $C_2$, $F_1$, $F_2,\ V\}$ is a complete set of moves,
for the moves $C_1$ and $C_2$ shown in Fig.~\ref{C1-C2-interchangeable:fig}, 
but the same figure shows the following, which implies that this set is actually redundant:
\begin{figure}
\figfatta{C1-C2-interchangeable}
{\includegraphics[scale=0.6]{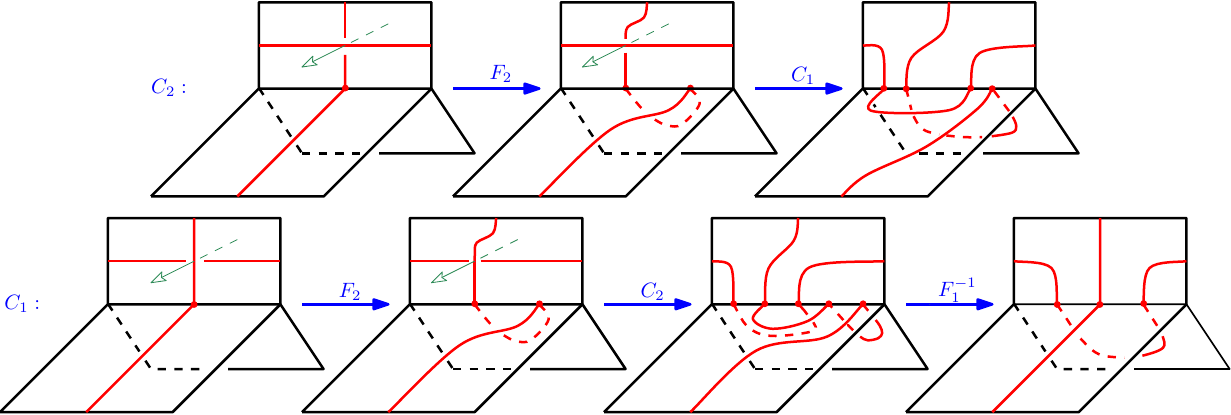}}
{Generating $C_1$ and $C_2$ from each other together with $F_1$ and $F_2$.\label{C1-C2-interchangeable:fig}}
\end{figure}

\begin{prop}
Dropping either $C_1$ or $C_2$ from 
$\{R_1$, $R_2$, $R_3$, $C_1$, $C_2$, $F_1$, $F_2, V\}$
one gets an equivalent set of moves.
\end{prop}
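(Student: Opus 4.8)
The plan is to read ``equivalent set of moves'' as meaning that two sets of moves generate the same equivalence relation on link diagrams on $\Sigma$. Since Theorem~3.2 of~\cite{Ben:et:al} guarantees that the full set $\{R_1,R_2,R_3,C_1,C_2,F_1,F_2,V\}$ is complete, namely that the equivalence relation it generates is precisely ``being diagrams of isotopic links,'' it is enough to prove that dropping $C_2$ (and, by the left--right symmetry built into our convention on moves, also $C_1$) yields a set generating the same relation. As each move is used in both directions, the only thing that can change when passing to a subset $A\subseteq B$ of moves is that the generated relation becomes finer; so to obtain equality I only need to show that the removed move can be recovered as a finite composition of the surviving ones.

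Concretely, I would first recall that all the moves here are \emph{local}: each is performed inside a small disc of $\Sigma$ while the rest of the diagram is frozen. Consequently the relation generated by a set of moves is the smallest equivalence relation containing every such local elementary transition, and to factor the move $C_2$ through the reduced set $\{R_1,R_2,R_3,C_1,F_1,F_2,V\}$ it suffices to factor one local occurrence of $C_2$: any global application then inherits the factorization verbatim.

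The core of the argument is exactly the content of Fig.~\ref{C1-C2-interchangeable:fig}, which I would simply read as a pair of mutual factorizations. In one direction it expresses the local transition of $C_2$ as a composition involving $F_1$, $F_2$ and a single $C_1$, so that $C_2$ becomes redundant in the presence of the others; in the other direction it expresses $C_1$ through $C_2$, $F_1$ and $F_2$, so that $C_1$ is likewise redundant. Combining the two readings yields both reduced sets simultaneously, and their completeness follows at once from that of the full set.

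The step I expect to demand the most care is not the chain of pictures but the bookkeeping of the decorations along it: I must check that the over/under choices and the transverse orientations attached to the crossings in each intermediate diagram match those required by the next move, and that the identification of a crossing with its doubly-reversed decoration (Fig.~\ref{crossing-dec-smooth:fig}) together with the symmetry convention on moves is invoked exactly where needed, so that the composite reproduces the decorated output of $C_2$ in every decoration case and not merely up to an unaccounted symmetry. Once this finite check is carried out, localness promotes the single-occurrence factorization to an equality of the generated equivalence relations, completing the proof.
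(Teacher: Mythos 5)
Your proposal is correct and follows the paper's own route: the paper proves this proposition precisely by exhibiting Fig.~\ref{C1-C2-interchangeable:fig} as a pair of mutual local factorizations of $C_1$ and $C_2$ through each other together with $F_1$ and $F_2$, with completeness of the reduced sets then inherited from Theorem~3.2 of~\cite{Ben:et:al}. Your additional remarks on localness and on tracking the crossing decorations are sensible bookkeeping but do not constitute a different argument.
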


We next show that our Theorem~\ref{main:link:thm} and Theorem 3.2 in~\cite{Ben:et:al} can be deduced from each other, see
Fig.~\ref{C1-C-interchangeable:fig}:
\begin{figure}
\figfatta{C1-C-interchangeable}
{\includegraphics[scale=0.6]{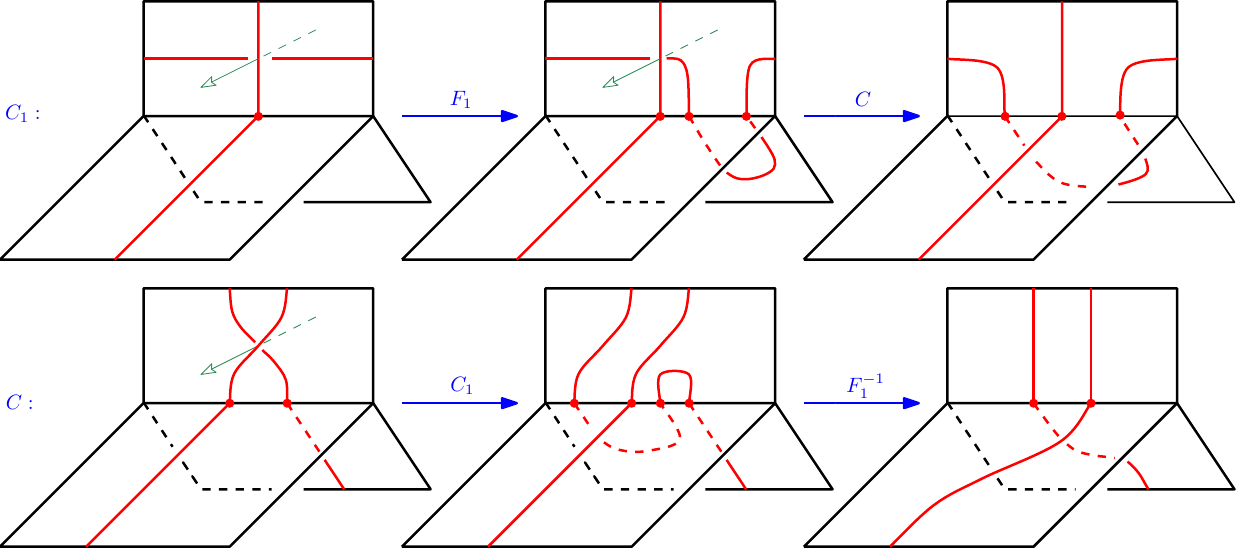}}
{Generating $C$ and $C_1$ from each other together with $F_1$.\label{C1-C-interchangeable:fig}}
\end{figure}

\begin{prop}
Replacing $C$ by $C_1$ in
$\{R_1$, $R_2$, $R_3$, $C_1$, $F_1$, $F_2, V\}$
one gets an equivalent set of moves.
\end{prop}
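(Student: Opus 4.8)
The plan is to show that the complete set $\{R_1,R_2,R_3,C,F_1,F_2,V\}$ of Theorem~\ref{main:link:thm} and the set $\{R_1,R_2,R_3,C_1,F_1,F_2,V\}$ obtained from it by swapping $C$ for $C_1$ generate the same equivalence relation on link diagrams. Recall that two sets of local moves are \emph{equivalent} precisely when each move of either set is realizable as a finite composition of moves of the other; since this is a symmetric and transitive condition, it is enough to check mutual realizability of the generators. Here the two sets differ only in that the first contains $C$ and the second contains $C_1$, while $R_1,R_2,R_3,F_1,F_2,V$ are common to both. Hence I would reduce the whole statement to two local claims: that the move $C$ can be produced by a finite sequence involving $C_1$ and $F_1$, and conversely that $C_1$ can be produced by a finite sequence involving $C$ and $F_1$.

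Both realizations are exactly what Fig.~\ref{C1-C-interchangeable:fig} is intended to exhibit, read in its two directions. Concretely, I would argue that prepending or appending a single $F_1$ move (a finger pushing a strand across the relevant edge of $\Sigma^{(1)}$) carries the local configuration defining $C$ into the one defining $C_1$, and back again, so that $C_1$ followed by $F_1$ equals $C$ and $C$ followed by $F_1$ equals $C_1$. Once these two compositions are verified, every move of the first set belongs to the equivalence relation generated by the second and vice versa, so the sets are equivalent; in particular, since the first is complete by Theorem~\ref{main:link:thm}, so is the second.

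The step I expect to require the most care is the bookkeeping of decorations, rather than the underlying topology. At each crossing one records both a choice of overstrand and a transverse orientation of the ambient face, subject to the simultaneous-reversal convention of Fig.~\ref{crossing-dec-smooth:fig}, while $F_1$ relocates a strand past an edge where two folds of $\Sigma^{(2)}$ meet. I would therefore check, face by face in the local model of Fig.~\ref{pre-C-move-smooth:fig}, that the over/under data and the transverse orientations produced by the $C_1$-then-$F_1$ composite coincide with those prescribed by $C$, and symmetrically for the reverse composite; this is the point at which a careless reading could identify genuinely distinct decorated configurations. Finally, I would invoke the move convention stated before Theorem~\ref{main:link:thm}, identifying a move with its images under the symmetries of $3$-space fixing the source picture, so that the single instance drawn in Fig.~\ref{C1-C-interchangeable:fig} simultaneously disposes of the mirrored versions of both composites.
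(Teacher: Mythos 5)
Your proposal is correct and follows essentially the same route as the paper, which proves this proposition purely by exhibiting in Fig.~\ref{C1-C-interchangeable:fig} that $C$ and $C_1$ generate each other with the help of $F_1$, the remaining moves being common to both sets. Your additional remarks on tracking the crossing decorations and invoking the symmetry convention are exactly the checks implicit in reading that figure.
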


\paragraph{Relations with~\cite{Ben:et:al}}
We have stated above that our Theorem~\ref{main:link:thm} and Theorem 3.2 in~\cite{Ben:et:al} are equivalent to each other, 
however the settings of the two results differ as follows:
\begin{itemize}
\item As opposed to~\cite{Ben:et:al}, we do not 
assume that $M$ is oriented (but this assumption is actually not used in~\cite{Ben:et:al},
so this difference is immaterial);
\item In~\cite{Ben:et:al} the decoration of the double points of a diagram does not include the transverse orientation, 
which is indeed necessary for the diagram to unambiguously define a link; of course all the figures of the moves in~\cite{Ben:et:al} 
are correct if one makes the convention that the faces shown are transversely oriented in the direction exiting the page.
\end{itemize}

A more important remark concerns the proof of Theorem 3.2 in~\cite{Ben:et:al}. In fact, in the last two lines of page 1194 in~\cite{Ben:et:al},
the claim is made that any link $L$ can be isotoped to lie on $\partial N(\Sigma)$, namely on $\partial M$. However, this is not always the case:

\begin{prop}
There exist $M$ and $\Sigma$ and a knot $K$ in $M$ such that $K$ cannot be isotoped away from $\Sigma$.
\end{prop}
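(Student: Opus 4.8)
The plan is to defeat the claim by a homological obstruction, choosing $M$ so that its boundary is too small to carry the knot. I would take a closed $3$-manifold $N$ with $H_1(N)\neq 0$ --- the simplest choice being $N=S^1\times S^2$, so that $H_1(N)\cong\matZ$ --- and set $M=N\setminus\Int(B^3)$, a compact connected $3$-manifold whose boundary $\partial M$ is a single $2$-sphere. As recalled before Theorem~\ref{main:link:thm}, replacing $N$ by $M$ does not change the isotopy theory, and by Matveev's theorem (see~\cite{Matveev:book}) $M$ admits a special, hence almost special, spine $\Sigma$; I fix such a $\Sigma$. For the knot I take $K=S^1\times\{p\}$ for some point $p$, pushed into $\Int(M)$ so as to miss the removed ball, so that $K$ represents a generator of $H_1(M)$.

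The argument then proceeds in three short steps. First I would record that deleting an open ball does not affect first homology: applying Mayer--Vietoris to the decomposition $N=M\cup_{S^2}B^3$, and using $H_1(S^2)=H_1(B^3)=0$, gives $H_1(M)\cong H_1(N)\cong\matZ$, with $[K]$ a generator. Second I would invoke the defining property of a spine: since $M$ is a regular neighbourhood of $\Sigma$, the complement $\Int(M)\setminus\Sigma$ is an open collar of $\partial M$, homeomorphic to $\partial M\times(0,1)$ and in particular deformation retracting onto a parallel copy of $\partial M$. Hence, if $K$ could be isotoped away from $\Sigma$, it could be isotoped into this collar, and its class $[K]$ would lie in the image of the inclusion-induced map $H_1(\partial M)\to H_1(M)$. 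Third I would conclude: here $\partial M=S^2$, so $H_1(\partial M)=0$ and that image is trivial, whereas $[K]$ generates $H_1(M)\cong\matZ$ and is therefore nonzero --- a contradiction. Thus $K$ cannot be isotoped away from $\Sigma$.

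There is no genuine technical obstacle in the verification; the only real content lies in selecting the example correctly. The crucial point is to make $\partial M$ \emph{homologically} trivial, which is why I work inside a closed manifold with a point removed so that $\partial M=S^2$. This is exactly the feature that a naive choice would miss: for instance, a solid torus (were it equipped with a suitable spine) would fail, since its boundary torus surjects onto $H_1$, so every class is carried by $\partial M$ and no homological obstruction exists. Using a sphere boundary sidesteps this and directly contradicts the assertion in~\cite{Ben:et:al} that any link can be isotoped onto $\partial M$. (One could push further and produce $K$ that is even null-homologous yet still not isotopic off $\Sigma$, via a $\pi_1$-level argument, but the homological example already suffices for the stated existence claim.)
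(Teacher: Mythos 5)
Your proof is correct, and it takes a genuinely different route from the paper's. The paper argues by exhibiting a concrete spine with an essential sphere inside it: in $N=S^2\times S^1$ minus \emph{two} balls it perturbs $(S^2\times\{\mathrm{pt}\})\cup((\partial\Delta)\times S^1)$ to a special spine $\Sigma$ that still contains the non-separating sphere $S^2\times\{\mathrm{pt}\}$, and then observes that $K=\{\mathrm{pt}\}\times S^1$ has intersection number $1$ with that sphere, an isotopy invariant, so $K$ can never be pushed off $\Sigma$. You instead remove a single ball, take an \emph{arbitrary} special spine, and use the collar structure of the complement of a spine together with $H_1(\partial M)=H_1(S^2)=0$ against $[K]\neq 0$ in $H_1(M)\cong\mathbb{Z}$. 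All your steps are sound: Mayer--Vietoris gives $H_1(M)\cong H_1(N)$, the mapping-cylinder structure of a regular neighbourhood does give $M\setminus\Sigma\cong\partial M\times[0,1)$, and Matveev's theorem supplies the special spine. What your route buys is spine-independence --- the obstruction lives entirely in the pair $(M,\partial M)$, so it shows that \emph{no} spine of this $M$ permits isotoping $K$ off it, which refutes the claim of~\cite{Ben:et:al} more robustly; what the paper's route buys is an explicit picture of the offending spine and a localization of the obstruction inside $\Sigma$ itself (an embedded essential sphere) rather than in the smallness of $\partial M$. Both are complete proofs of the stated existence claim.
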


\begin{proof}
Let $N$ be $S^2\times S^1$.  Take a disc $\Delta$ in $S^2$ and let $P$ be the spine $(S^2\times\{\textrm{pt}\})\cup((\partial\Delta)\times S^1)$ of $N$ 
minus two balls, that we define as $M$.
Perturb $P$ to become a special spine $\Sigma$ of $M$ as suggested in Fig.~\ref{S2xS1_spine:fig}.
\begin{figure}
\figfatta{S2xS1_spine}
{\includegraphics[scale=0.6]{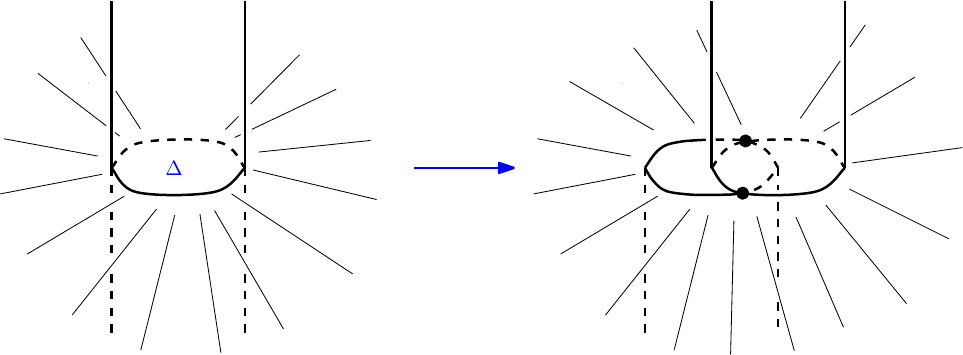}}
{A special spine of $S^2\times S^1$ minus two balls.\label{S2xS1_spine:fig}}
\end{figure}
Then $\Sigma$ still contains $S^2\times\{\textrm{pt}\}$, so $K=\{\textrm{pt}\}\times S^1$ cannot be isotoped away from $\Sigma$ in $N$ or $M$.
\end{proof}

This shows that the vertex moves of Figg.~\ref{move3312generated:fig},~\ref{move3321generated:fig} and~\ref{move3322:fig}
do occur along a generic isotopy of a link $L$ (or, at least, that they cannot be ruled out by the argument that $L$ can 
be isotoped on $\partial M$). One of the contributions of this paper is then to show that indeed they can be generated
by the vertex move $V$ and the other moves of the statement of our Theorem~\ref{main:link:thm}.

We also note that the move $F_2$ considered in~\cite{Ben:et:al} (named FL/FR there) does not come from the 
Reidemeister moves on $\partial M$ mentioned in the last two lines of page 1194 in~\cite{Ben:et:al}.
In fact, in the local configuration resulting from $F_2$ the link cannot be locally isotoped away from $\Sigma$.
The same applies to $C_2$.
We conclude by noting that the fact that the move $S$ of Fig.~\ref{S-move-generated:fig} is generated by the other moves 
is probably implicit in Fig.~5 of~\cite{Ben:et:al}.

\paragraph{Changing the spine}
So far, $\Sigma$ has been a fixed almost special spine of $M$.  However, if we restrict to a special $\Sigma$ 
with at least two vertices, then we can allow it to vary:

\begin{thm}\label{MP:link:thm}
Links in $M$ up to isotopy correspond to pairs $(\Sigma,D)$ 
where $\Sigma$ is a special spine of $M$ with at least two vertices embedded in $M$, and $D$ is a link diagram on $\Sigma$, 
up to the moves $R_1$, $R_2$, $R_3$, $C$, $F_1$, $F_2$, $V$ on $D$ for fixed $\Sigma$ and the \emph{MP} move of Fig.~\ref{MP-move:fig}
\begin{figure}
\figfatta{MP-move}
{\includegraphics[scale=0.6]{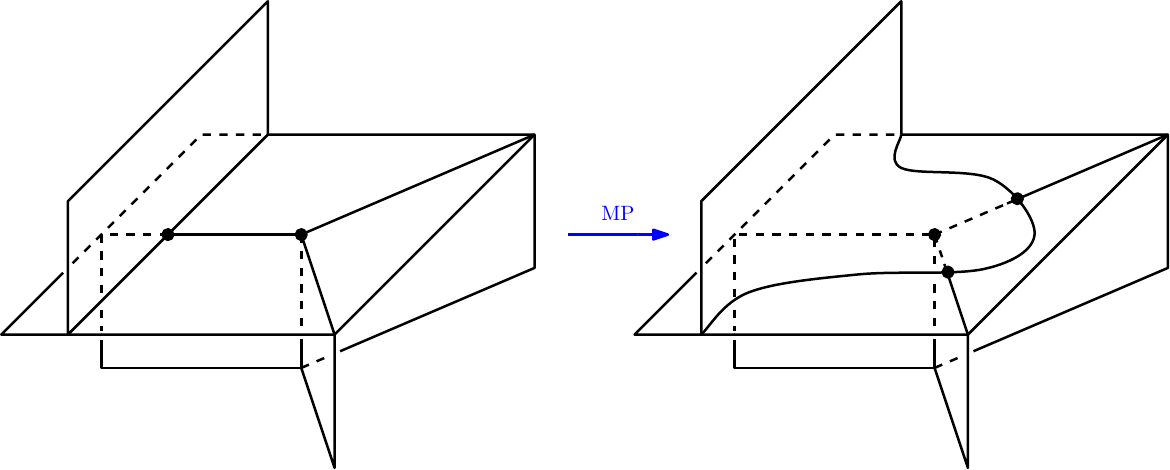}}
{The Matveev-Piergallini move.\label{MP-move:fig}}
\end{figure}
performed within $M$, with $D$ drawn away from the portion of $\Sigma$ affected by the move and unchanged by the move.
\end{thm}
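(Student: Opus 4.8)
The plan is to bootstrap from the fixed-spine result, Theorem~\ref{main:link:thm}, using the Matveev--Piergallini theorem to pass between spines while carrying the diagram along. First I would dispose of the trivial assertions. Every link in $M$ produces a pair $(\Sigma,D)$: choose any special spine of $M$ with at least two vertices (such spines exist for every $M$) and take a diagram $D$ of the link on it. The fixed-spine moves $R_1,R_2,R_3,C,F_1,F_2,V$ preserve the isotopy class of $L(D)$ by Theorem~\ref{main:link:thm}. For the MP move of Fig.~\ref{MP-move:fig}, observe that it alters $\Sigma$ and the projection $\rho$ only inside the ball $B$ where it is performed; since $D$ is drawn away from the affected portion of $\Sigma$ and is literally unchanged, the subset $L(D)\subset M$ (including the small push-offs near crossings) is the \emph{same} before and after the move, so its isotopy class is trivially preserved. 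This shows the correspondence is well defined and surjective; the content is the converse.

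So suppose $(\Sigma_0,D_0)$ and $(\Sigma_1,D_1)$ satisfy $L(D_0)$ isotopic to $L(D_1)$, and set $L_0=L(D_0)$. By the Matveev--Piergallini theorem (see~\cite{Matveev:book}), because both spines are special with at least two vertices, there is a finite sequence of MP moves and their inverses, through special spines with at least two vertices, connecting them: $\Sigma_0=\Xi_0,\Xi_1,\dots,\Xi_k=\Sigma_1$. Each $\Xi_i$ is special, hence almost special, so Theorem~\ref{main:link:thm} applies verbatim to diagrams on it. I would then carry a diagram of the fixed link $L_0$ along this sequence by induction: starting from $D^{(0)}=D_0$ on $\Xi_0$, suppose we have a diagram $D^{(i-1)}$ of $L_0$ on $\Xi_{i-1}$; I clear it off the region affected by the move $\Xi_{i-1}\to\Xi_i$, then perform that MP move with the (now cleared) diagram unchanged to obtain $D^{(i)}$ on $\Xi_i$, still representing $L_0$. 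After $k$ steps this yields a diagram $D^{(k)}$ of $L_0$ on $\Sigma_1$. Since $D^{(k)}$ and $D_1$ are two diagrams \emph{on the same spine} $\Sigma_1$ of the isotopic links $L_0$ and $L(D_1)$, Theorem~\ref{main:link:thm} connects them by fixed-spine moves, and chaining $D_0=D^{(0)}\to\cdots\to D^{(k)}\to D_1$ completes the argument.

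The heart of the proof, and the step I expect to be the main obstacle, is the clearing lemma: given a link $L$, a special spine $\Xi_{i-1}$, and the ball $B$ in which the $i$-th MP move takes place, there exists a diagram of $L$ on $\Xi_{i-1}$ disjoint from the affected portion $\Xi_{i-1}\cap B$. Here I would use existence, not an explicit move sequence: the union of the projection fibres of $\rho$ over $\Xi_{i-1}\cap B$ is a ball $U\subset M$ (the affected piece of spine is collapsible and $M$ is a regular neighbourhood of $\Xi_{i-1}$), and any link in a $3$-manifold can be isotoped off an embedded ball in its interior, using the whole of $M$ rather than an isotopy supported in $U$ (so that no knotting of $L\cap U$ inside $U$ obstructs the clearing). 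Pushing $L$ off $U$ and projecting produces a diagram $\widetilde{D}^{(i-1)}$ of $L$ on $\Xi_{i-1}$ avoiding $\Xi_{i-1}\cap B$; then Theorem~\ref{main:link:thm} supplies a sequence of fixed-spine moves from $D^{(i-1)}$ to $\widetilde{D}^{(i-1)}$, after which the MP move is legitimately applicable with the diagram away from, and unaltered by, the modification. The delicate points to verify carefully are precisely that $U$ is genuinely a ball (equivalently, that the local MP configuration of $\Xi_{i-1}\cap B$ is collapsible) and that isotoping a link off a ball is unobstructed; both reduce to standard $3$-manifold position arguments once the local picture of Fig.~\ref{MP-move:fig} is fixed.
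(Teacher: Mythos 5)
Your proposal is correct and follows essentially the same route as the paper's proof: invoke the Matveev--Piergallini calculus to connect the two special spines, clear the diagram off the preimage under $\rho$ of the portion of the spine affected by each move (justified, as in the paper, by isotoping the link off that ball-like region), transfer the cleared diagram to the next spine, and finish with Theorem~\ref{main:link:thm} on the fixed spine at each stage and at the end. The only cosmetic difference is that the paper clears the diagram only off the disappearing edge (respectively, triangular face) and ``copies'' the rest onto the new spine, whereas you clear the whole affected region $\Xi_{i-1}\cap B$, which is if anything closer to the literal wording of the statement.
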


\begin{rem}\emph{We must insist that $\Sigma$ is embedded in $M$ and that the \textrm{MP} move is performed within $M$ because
the same $\Sigma$ may have non-isotopic embeddings in $M$, which may lead one diagram on the the same abstract $\Sigma$ to define 
non-isotopic links in $M$.}
\end{rem}

\begin{proof}
Any $(\Sigma,D)$ as in the statement defines a link $L(\Sigma,D)$ up to isotopy, and for any link $L$ in $M$ and any $\Sigma$ there exists $D$ 
on $\Sigma$ such that $L(\Sigma,D)$ is $L$. Pairs related by moves as in the statement give isotopic links. Now suppose that $L(\Sigma,D)$ and
$L(\Sigma',D')$ are isotopic. By~\cite{Matveev:book} there exists a sequence $\Sigma=\Sigma_0\to\Sigma_1\to\ldots\to \Sigma_N=\Sigma'$
where each arrow is an \textrm{MP} move or an inverse \textrm{MP} move within $M$.
Note that an \textrm{MP} move (respectively, an inverse \textrm{MP} move) $\Sigma_j\to \Sigma_{j+1}$
causes the disappearance of an edge (respectively, of a triangular face) of $\Sigma_j$. 
Moreover:
\begin{itemize}
\item For every diagram $D_j$ on $\Sigma_j$ there exists another diagram $D'_j$ 
that avoids this edge (respectively, triangular face) such that $L(\Sigma_j,D_j)$ and $L(\Sigma_j,D'_j)$ are isotopic, because $L(\Sigma_j,D_j)$ 
can be isotoped away from a neighbourhood of the inverse image under $\rho$ of the edge (respectively, triangular face);
\item For such a $D'_j$, there is a link diagram $D_{j+1}$ on $\Sigma_{j+1}$ such that $L(\Sigma_j,D'_j)$ 
and $L(\Sigma_{j+1},D_{j+1})$ are isotopic, because $D'_j$ can be copied on $\Sigma_{j+1}$.
\end{itemize}
Using these facts one constructs a sequence
\begin{eqnarray*}
& (\Sigma,D)=(\Sigma_0,D_0)\to(\Sigma_0,D'_0)\to (\Sigma_1,D_1)\to(\Sigma_1,D'_1)\to(\Sigma_2,D_2)\to\ldots& \\
& \ldots \to (\Sigma_{N-1},D'_{N-1}) \to (\Sigma_N,D_N)=(\Sigma',D_N)\to(\Sigma',D') &
\end{eqnarray*}
all encoding isotopic links.  Hence all $(\Sigma_j,D_j)\to(\Sigma_j,D'_j)$ and $(\Sigma',D_N)\to(\Sigma',D')$
translate into moves $R_1$, $R_2$, $R_3$, $C$, $F_1$, $F_2$, $V$, while all $(\Sigma_j,D'_j)\to L(\Sigma_{j+1},D_{j+1})$
are moves as in Fig.~\ref{MP-move:fig}, whence the conclusion.
\end{proof}


\section{Band diagrams}
If $L$ is a link in $M$, we call \emph{band with core $L$} a collection of 
cylinders and M\"obius strips embedded in $M$, one for each component of $L$ 
and having that component as a core.  A \emph{framed link with core $L$} is
a band with core $L$ with all cylinder components. We will denote henceforth 
by $U(L)$ a regular neighbourhood of $L$ in $M$, noting that each component of
$U(L)$ is either a solid torus or a solid Klein bottle (an orientable
or non-orientable $D^2$-bundle over $S^1$), and that a band with core $L$
can be realized up to isotopy as properly embedded in $U(L)$.

Let $\Sigma$ be an almost special spine of $M$. We call \emph{band-diagram} $\widetilde{D}$ on $\Sigma$ a link diagram $D$
endowed with the following extra structure:

\begin{itemize}
  \item Each portion of $D$ corresponding to a component $K$ of $L(D)$ such that $U(K)$ is a solid Klein bottle
  has a global label ``c'' or ``m'';
  \item On the rest of $D$ there are finitely many points $x$, called \emph{half-twist} points, which lie on $\Sigma^{(2)}$ and 
  are not crossings of $D$, with a decoration given by:
(1) An orientation for $D$ at $x$;
(2) A transverse orientation for $D$ in $\Sigma$ at $x$;
(3) A transverse orientation for $\Sigma$ in $M$ at $x$;
  \item Two half-twist point decorations are viewed as the same if they are obtained from each other 
  by simultaneous reversal of two of the orientations they consist of
  (so only two decorations of a point exist).
\end{itemize}

We will now  associate to such a $\widetilde{D}$ a band $B(\widetilde{D})$ with core $L(D)$, which requires a little preparation.
To begin we note that, for any knot $K$, any two homeomorphic proper bands with core $K$ in $U(K)$
differ at most by a move $\tau^n$ for $n\in\matZ$, where $\tau$ is the move that inserts a
full twist to $B$ in a portion of $U(K)$ locally trivialized as $D^1\times D^2$ 
where $B$ is initially $D^1\times D^1\times\{0\}$, see Fig.~\ref{band_double_twist:fig}.
\begin{figure}
\figfatta{band_double_twist}
{\includegraphics[scale=0.6]{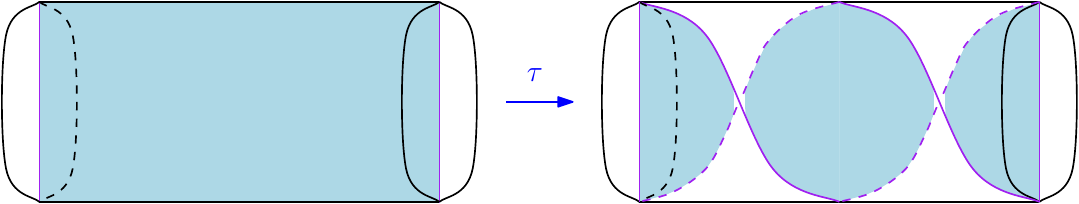}}
{A local move on a proper band  in a solid torus or Klein bottle.\label{band_double_twist:fig}}
\end{figure}
Note that $\tau$ is not quite well-defined, two possibly inequivalent versions exist, but all we say in the sequel holds true for either choice of $\tau$.

\begin{prop}\label{klein_isotopy_double_twist:prop}
If $K$ is a knot in $M$ and $U(K)$ is a solid Klein bottle, any two homeomorphic bands 
with core $K$ are properly isotopic within $U(K)$.
\end{prop}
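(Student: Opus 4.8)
The plan is to realize the full-twist move $\tau$ by an explicit ambient isotopy of the solid Klein bottle $U(K)$ that preserves $\partial U(K)$, so that $B$ and $\tau(B)$ become properly isotopic. Since the paragraph preceding the statement guarantees that any two homeomorphic proper bands with core $K$ differ by some $\tau^{n}$ with $n\in\matZ$, once I know that a single $\tau$ is trivial up to proper isotopy I may iterate and conclude that all homeomorphic bands are properly isotopic. First I would fix a model: write $U(K)$ as the mapping torus of the reflection $r\colon D^2\to D^2$, $r(z)=\overline{z}$, that is
$$U(K)=\bigl([0,1]\times D^2\bigr)\big/\bigl((1,z)\sim(0,\overline{z})\bigr),$$
with $K$ the image of $[0,1]\times\{0\}$. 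The orientation-reversing monodromy $r$ is exactly the feature of the solid Klein bottle that will make the twist disappear. Up to a preliminary proper isotopy (again using the reduction above) I may assume $B$ is the standard annular band carried by the real diameter $[-1,1]\subset D^2$, or the standard M\"obius band carried by the imaginary diameter; both are $r$-invariant, hence both descend to genuine bands in $U(K)$.

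The key step is a fibrewise rotation. For a smooth $h\colon[0,1]\to\matR$ with $h(0)=-\tfrac12$, $h(1)=+\tfrac12$, increasing from $-\tfrac12$ to $+\tfrac12$ inside a small interval about $s=\tfrac12$ and constant outside it, set
$$\Phi_t(s,z)=\bigl(s,\;e^{\,2\pi i\,t\,h(s)}\,z\bigr),\qquad t\in[0,1].$$
A level-preserving map $(s,z)\mapsto(s,e^{i\theta(s)}z)$ descends to $U(K)$ precisely when $\overline{e^{i\theta(1)}z}=e^{i\theta(0)}\,\overline{z}$, i.e. when $\theta(0)+\theta(1)\equiv0\pmod{2\pi}$; this is where the reflection monodromy enters. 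For our choice $\theta(s)=2\pi t\,h(s)$ one has $\theta(0)+\theta(1)=2\pi t\bigl(-\tfrac12+\tfrac12\bigr)=0$ for \emph{every} $t$, so each $\Phi_t$ is a well-defined self-homeomorphism of $U(K)$ preserving $\partial U(K)$, with $\Phi_0=\mathrm{id}$; thus $\{\Phi_t\}$ is a proper ambient isotopy. Finally $\Phi_1$ leaves the band unchanged away from $s=\tfrac12$ (there the residual rotation is by $\pm\pi$, i.e. $z\mapsto -z$, which preserves both the real and the imaginary diameter) and inserts exactly one full twist near $s=\tfrac12$; hence $\Phi_1(B)=\tau(B)$, giving $B\simeq\tau(B)$ and then $B\simeq\tau^{n}(B)$ for all $n$.

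The main obstacle is the descent (balance) condition $\theta(0,t)+\theta(1,t)\equiv0$, together with the recognition that it can be maintained throughout the isotopy only because the monodromy is \emph{orientation reversing}. In an orientable solid torus the monodromy is trivial, the condition would instead read $\theta(0,t)\equiv\theta(1,t)$, and one cannot grow a map of winding number $1$ out of the constant map without violating it at intermediate times; this is the usual fact that the $\matZ$-valued framing is an isotopy invariant, so there $\tau$ is genuinely nontrivial. The naive idea of sliding a single twist once around $K$ and using $r$ to flip its sign only proves $\tau^{2}\simeq\mathrm{id}$; the gain here is that growing the twist \emph{symmetrically} about a point of $K$ keeps the two ends rotating by opposite angles at all times, which is exactly what lets a single $\tau$ be undone. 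The only routine points left to check are that spreading versus concentrating the twist along $K$ is a proper isotopy, and that the assumed standard position of $B$ can be reached, both of which follow from the reduction quoted from the preceding paragraph.
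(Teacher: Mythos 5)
Your proof is correct and follows essentially the same route as the paper: both arguments invoke the preceding classification $B'=\tau^{n}(B)$ and then reduce everything to showing that a single full twist $\tau$ is killed by a proper isotopy of the solid Klein bottle. The only difference is one of presentation --- the paper exhibits that isotopy by a figure, while you write it down explicitly as the fibrewise rotation $\Phi_t(s,z)=\bigl(s,e^{2\pi i\,t\,h(s)}z\bigr)$ on the mapping torus of $z\mapsto\overline{z}$, correctly checking the descent condition $\theta(0)+\theta(1)\equiv 0$ that the orientation-reversing monodromy makes available, and usefully noting that the naive ``slide the twist once around $K$'' argument would only give $\tau^{2}\simeq\mathrm{id}$.
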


\begin{proof}
Call the bands $B$ and $B'$.
We know $B'=\tau^n(B)$, but
Fig.~\ref{klein_isotopy_double_twist:fig} 
\begin{figure}
\figfatta{klein_isotopy_double_twist}
{\includegraphics[scale=0.6]{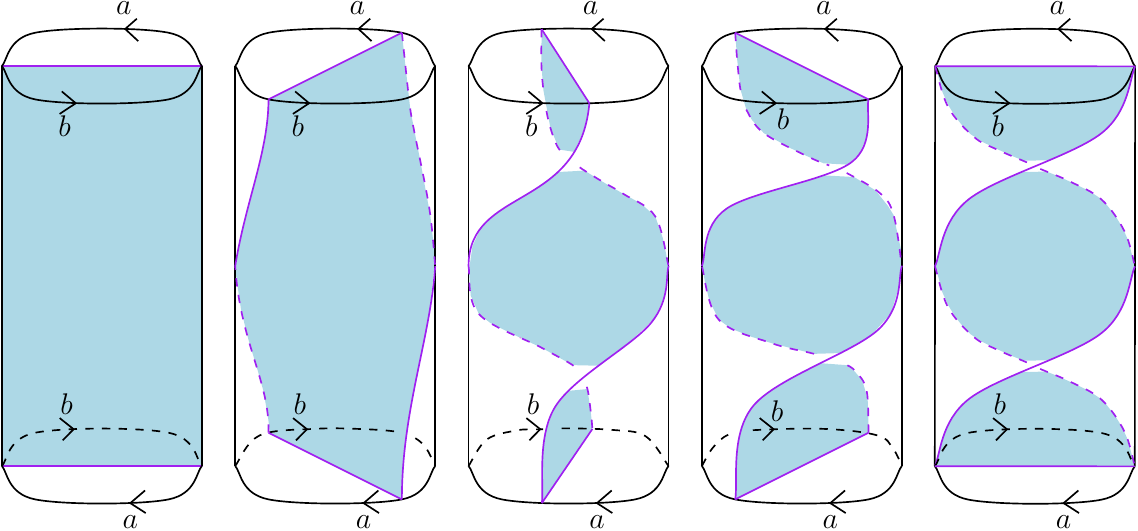}}
{Proper isotopy of a cylinder within a solid Klein bottle.\label{klein_isotopy_double_twist:fig}}
\end{figure}
shows that for $B$ a cylinder $\tau(B)$ is actually proper isotopic to $B$.
The same is true with a similar figure 
for $B$ a M\"obius strip, whence the conclusion.
\end{proof}

We can now define $B(\widetilde{D})$ for a band-diagram $\widetilde{D}$ on $\Sigma$. To a component $K$ of $L(D)$ such 
that $U(K)$ is a solid Klein bottle we associate a component of $B(\widetilde{D})$ which is either the only 
cylinder or the only M\"obius strip with core $K$, depending on whether the global label
of the corresponding portion of $D$ is ``c'' or ``m''. For the rest of $\widetilde{D}$, away from the crossings, 
the intersections with $\Sigma^{(1)}$ and the half-twist points,
$B(\widetilde{D})$ is just a regular neighbourhood of $D$ in $\Sigma$, while
Figg.~\ref{cross_sing_band:fig} and~\ref{band-dec:fig}
\begin{figure}
\figfatta{cross_sing_band}
{\includegraphics[scale=0.6]{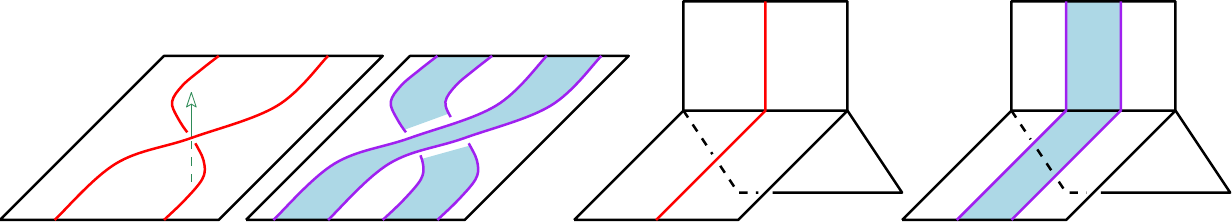}}
{The band $B(\widetilde{D})$ at a crossing of $D$ and at an intersection of $D$ with $\Sigma^{(1)}$.\label{cross_sing_band:fig}}
\end{figure}
show how to extend $B(\widetilde{D})$ near the special points. 
\begin{figure}
\figfatta{band-dec}
{\includegraphics[scale=0.6]{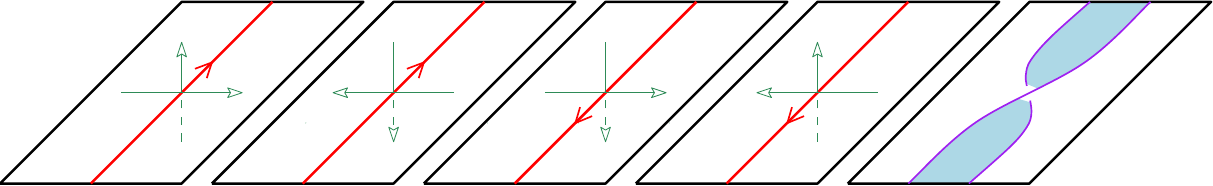}}
{Equivalent half-twist decorations of $\widetilde{D}$ and the associated band $B(\widetilde{D})$.\label{band-dec:fig}}
\end{figure}
Concerning the latter figure, the construction is well-defined
for equivalent decorations at a half-twist point $x$ because it can be intrinsically defined as follows (refer to the definition 
of the decoration for the numbering of the orientations): 
as one travels along (with respect to orientation 1) $D$ near $x$, the first (with respect to orientation 2) margin of $B(D)$  
passes over (with respect to orientation 3) the other margin.
The following is easy:

\begin{prop}\label{all:framed:arise}
Let $B$ be a band with core $L$ in $M$. Then there exists a band-diagram $\widetilde{D}$
such that $B(\widetilde{D})$ is isotopic to $B$.
Moreover, if $\Sigma$ is trivalent then one can choose $\widetilde{D}$ without half-twist points.
\end{prop}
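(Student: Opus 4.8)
The plan is to produce $\widetilde D$ by first drawing a link diagram for the core $L$ and then recording, component by component, exactly the framing (and topological-type) data needed to recover $B$. Since the paper states that a band with core $L$ may be realized up to isotopy as properly embedded in $U(L)$, I first invoke the existence of link diagrams discussed before Theorem~\ref{main:link:thm}: after a small isotopy of $L$ the projection $\rho|_L$ is generic, so $\rho(L)$ decorates to a link diagram $D$ with $L(D)$ isotopic to $L$. Carrying $B$ along this isotopy, I may assume $B$ is a band with core $L(D)$, properly embedded in a thin $U(L(D))$, and I treat each component $K$ of $L(D)$ separately according to whether $U(K)$ is a solid torus or a solid Klein bottle.

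For a component with $U(K)$ a solid Klein bottle, Proposition~\ref{klein_isotopy_double_twist:prop} asserts that the cylinder and the M\"obius strip with core $K$ are each unique up to proper isotopy; I therefore attach to the corresponding portion of $D$ the global label ``c'' or ``m'' matching the topological type of $B$ over $K$, and the resulting component of $B(\widetilde D)$ is automatically isotopic to that of $B$, with no half-twist points involved. For a component with $U(K)$ a solid torus, I compare $B$ over $K$ with the \emph{reference} band $B_0$ obtained from the flat neighbourhood of $D$ in $\Sigma$ together with the standard local models at crossings and at intersections with $\Sigma^{(1)}$ (that is, $B(\widetilde D)$ with no half-twist points). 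Recalling from the discussion preceding Proposition~\ref{klein_isotopy_double_twist:prop} that homeomorphic bands with core $K$ differ by $\tau^n$, and that these classes are genuinely distinct in a solid torus while a single half-twist interchanges the cylinder and M\"obius types (splitting or merging the boundary circles), the proper isotopy class of a band with core $K$ is governed by a signed integer $m\in\matZ$ counting half-twists relative to $B_0$, its parity recording the topological type. Since one half-twist point inserts a single signed half-twist, and the two inequivalent decorations of a half-twist point realise the two signs, I place $|m|$ half-twist points of the appropriate sign on the arc carrying $K$, so that $B(\widetilde D)\simeq B$ over $K$. Assembling this over all solid-torus components together with the labelled Klein-bottle components yields $\widetilde D$ with $B(\widetilde D)$ isotopic to $B$.

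For the ``moreover'', suppose $\Sigma$ is trivalent, so $\Sigma^{(1)}\neq\emptyset$. As $\Sigma$ is connected and singular, no face is a closed surface, hence every face meets $\Sigma^{(1)}$ along its boundary and any arc of $D$ can be pushed toward a nearby edge. I then trade each half-twist point for geometry at an edge: I route the arc on a small excursion across a nearby edge $e$ of $\Sigma$ (a finger across $e$), so that the edge local model of Fig.~\ref{cross_sing_band:fig} imparts to the band the required half-twist of the correct sign, and I delete the half-twist point. Iterating this realises any integer number $m$ of half-twists without any half-twist point, producing the desired half-twist-free $\widetilde D$ while keeping $L(D)$ and the isotopy class of the band unchanged.

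The step I expect to demand the most care is the framing bookkeeping for the solid-torus components: pinning down the invariant as a signed count of half-twists relative to $B_0$, and verifying that a single half-twist point changes it by exactly one unit with the sign dictated by the three orientations in the decoration (as in Fig.~\ref{band-dec:fig}). For the ``moreover'', the corresponding delicate point is checking against Fig.~\ref{cross_sing_band:fig} that one excursion across an edge contributes exactly the predicted half-twist; once these local normalizations are fixed, the remainder is a routine component-by-component assembly.
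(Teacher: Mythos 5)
Your argument is correct and follows essentially the same route as the paper's proof: take a link diagram of the core, attach the ``c''/``m'' labels on the solid-Klein-bottle components, use the $\tau^n$-classification of bands in a solid torus to insert the right signed number of half-twist points on the remaining components, and for the ``moreover'' trade each half-twist point for a finger excursion across an edge (i.e.\ a suitable $F_2$ move), whose local model in Fig.~\ref{cross_sing_band:fig} supplies the half-twist. The only difference is presentational: you make explicit the signed-integer bookkeeping that the paper leaves as ``this easily implies.''
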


\begin{proof}
Let $D$ be a diagram with $L(D)$ isotopic to $L$.
For a component $K$ of $L$ with $U(K)$ a solid Klein bottle, give the corresponding
portion of $D$ a global label ``c'' or ``m'' depending on whether the component of $B$
with core $K$ is a cylinder or a M\"obius strip. For a component $K$ of $L$ with $U(K)$ 
a solid torus, the corresponding portion of $D$ gives some band $\Gamma$ with core $K$.
Now if we add to $D$ one half-twist point we can change the topology of $\Gamma$, 
and if we add $2|n|$ half-twist points all of the same suitable type we can 
replace $\Gamma$ by $\tau^n(\Gamma)$. This easily implies that a suitably decorated 
version $\widetilde{D}$ of $D$ gives a $B(\widetilde{D})$ isotopic to $B$.
For the second statement, note that a half-twist can alternatively
be realized by performing a suitable $F_2$ move.
\end{proof}

\begin{rem}\emph{The second statement of the previous result implies that in an
orientable $M$, for a trivalent $\Sigma$, one can encode all bands using link diagrams.  However, inserting
the half-twist points seems inevitable when discussing the moves that relate two diagrams of the same band.}
\end{rem}

\paragraph{Band moves}
For band-diagrams on $\Sigma$ we consider the following moves:

\begin{itemize}
\item The moves $R_2,\ R_3,\ C,\ F_1,\ V$ (in which any global label, if present, is preserved, and no
half-twist point appears before or after the move);
\item The moves $\widetilde{R}_1$ and $\widetilde{F}_2$ shown in Fig.~\ref{R1_F2-move-framed:fig};
\begin{figure}
\figfatta{R1_F2-move-framed}
{\includegraphics[scale=0.6]{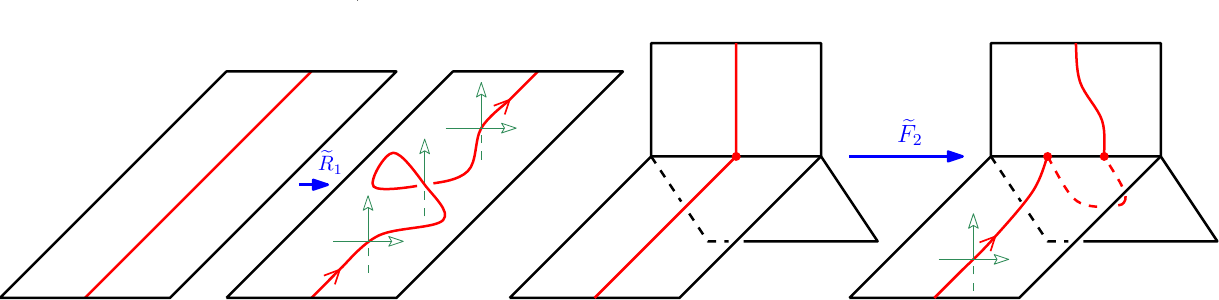}}
{Two moves on band-diagrams. In both, the involved portions of the diagram are not supposed to have a global label.
\label{R1_F2-move-framed:fig}}
\end{figure}
\item The move $A$ shown in Fig.~\ref{A-move-framed:fig}
\begin{figure}
\figfatta{A-move-framed}
{\includegraphics[scale=0.6]{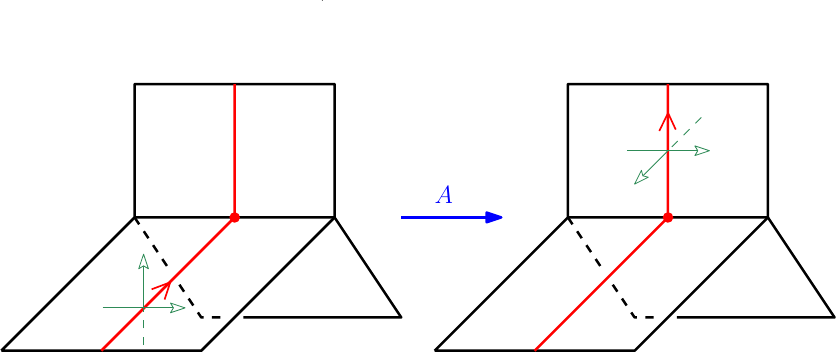}}
{Another move on band-diagrams.\label{A-move-framed:fig}}
\end{figure}
in which a half-twist point slides past an intersection of the diagram with $\Sigma^{(1)}$,
maintaining its type locally;
\item The move $B$ (not shown) in which a half-twist point slides past a crossing of the diagram 
maintaining its type locally;
\item The move $Z$ (not shown) in which two half-twist points of locally opposite type are created 
on an arc of the diagram.
\end{itemize}

One easily sees that these moves preserve the associated band up to
isotopy. Only for the moves $\widetilde{R}_1$ and $\widetilde{F}_2$ 
one needs to show that the local twisting created by the undecorated move
is annihilated by the insertion of half-twist points.
We will now show the following:

\begin{thm}\label{main:band:trivalent:thm}
Bands in $M$ up to isotopy correspond bijectively to 
band-diagrams on $\Sigma$ up to isotopy on $\Sigma$ and the 
moves $\widetilde{R}_1$, $ R_2$, $ R_3$, $ C$, $ F_1$, $\widetilde{F}_2$, $V$, $ A$, $ B$, $ Z$.
\end{thm}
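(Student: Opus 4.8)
The plan is to reduce the statement to the already-established link case, Theorem~\ref{main:link:thm}, and then to account separately for the extra framing data carried by the half-twist points and the global labels. As usual the correspondence splits into four parts. The assignment $\widetilde{D}\mapsto B(\widetilde{D})$ is well defined up to isotopy by the intrinsic construction of the band; it is surjective by Proposition~\ref{all:framed:arise}; and each of the listed moves preserves $B(\widetilde{D})$ up to isotopy, as remarked before the statement. What remains, and is the real content, is \emph{injectivity}: if $\widetilde{D}_0$ and $\widetilde{D}_1$ are band-diagrams with $B(\widetilde{D}_0)$ isotopic to $B(\widetilde{D}_1)$, then $\widetilde{D}_0$ and $\widetilde{D}_1$ are related by the moves $\widetilde{R}_1,R_2,R_3,C,F_1,\widetilde{F}_2,V,A,B,Z$.

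To prove injectivity I would first forget the band structure. An isotopy between $B(\widetilde{D}_0)$ and $B(\widetilde{D}_1)$ restricts to an isotopy between the cores $L(D_0)$ and $L(D_1)$, so by Theorem~\ref{main:link:thm} the underlying link diagrams $D_0$ and $D_1$ are joined by a finite sequence of link moves $R_1,R_2,R_3,C,F_1,F_2,V$. The next step is to lift this sequence to a sequence of band-diagrams, keeping the associated band fixed up to isotopy throughout. Proceeding inductively along the sequence, starting from $\widetilde{D}_0$: a move of type $R_2,R_3,C,F_1$ or $V$ is framing-neutral, hence lifts verbatim to the band move of the same name (first using $A$, $B$ and isotopy to slide any half-twist point out of the affected region, as those moves require), with the half-twist points and global labels outside the region simply carried along; a move of type $R_1$ or $F_2$ changes the local twisting of the band and is lifted instead to $\widetilde{R}_1$ or $\widetilde{F}_2$, whose very design inserts the half-twist point(s) needed to leave $B$ unchanged. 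This produces band moves $\widetilde{D}_0 \to \cdots \to \widetilde{D}_1'$, where $\widetilde{D}_1'$ has underlying link diagram $D_1$ and $B(\widetilde{D}_1')$ is isotopic to $B(\widetilde{D}_0)$, hence to $B(\widetilde{D}_1)$.

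It then remains to \emph{reconcile} the two band-diagrams $\widetilde{D}_1'$ and $\widetilde{D}_1$, which share the underlying link diagram $D_1$ and have isotopic bands, using only $A,B,Z$ and isotopy on $\Sigma$. On a component $K$ with $U(K)$ a solid Klein bottle there are no half-twist points, and by Proposition~\ref{klein_isotopy_double_twist:prop} the band is determined by its topology; hence the global labels of $\widetilde{D}_1'$ and $\widetilde{D}_1$ on $K$ already agree. On a component with $U(K)$ a solid torus the band is determined, for fixed $D_1$, by the net signed number of half-twist points on that component, an element of $\matZ$; this net count is invariant under $A$ and $B$, which preserve the local type, and under $Z$, which creates or destroys a pair of opposite type, and it records exactly the framing of the band. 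Since $\widetilde{D}_1'$ and $\widetilde{D}_1$ give isotopic bands and carry the same $D_1$, their net counts coincide on every such component; as any configuration of half-twist points with a given net count can be brought into a standard form by creating cancelling pairs with $Z$ and sliding them to a fixed position with $A$, $B$ and isotopy along the arc, the two diagrams are $A,B,Z$-related. Concatenating the three steps yields a sequence of moves from $\widetilde{D}_0$ to $\widetilde{D}_1$, proving injectivity and hence the theorem.

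The main obstacle I expect is the bookkeeping of the framing, in two places. First, one must check that the five moves lifted verbatim really are framing-neutral and that $\widetilde{R}_1$ and $\widetilde{F}_2$ compensate \emph{exactly} the half-twist introduced by the underlying $R_1$ and $F_2$; this is the precise content of the remark that the moves preserve $B(\widetilde{D})$, and it is where the two possible conventions for a half-twist must be handled uniformly. Second, the reconciliation step rests on the claim that the net signed count of half-twist points is a \emph{complete} invariant of a torus-component band relative to a fixed diagram, and that half-twist points may be freely slid around a whole component and annihilated in opposite pairs. Verifying that sliding past crossings and past edges of $\Sigma^{(1)}$, namely the moves $B$ and $A$, never alters this count, and that no additional move is required, is the one genuinely delicate point.
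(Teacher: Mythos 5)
Your overall architecture --- reduce to the link-level theorem, lift each link move to its band version, and then reconcile the residual discrepancy in the half-twist data via the $\mathbb{Z}$-classification of proper bands in a solid torus --- is close in spirit to the paper's proof, and the first two steps are essentially what the paper does. The gap is in the reconciliation step. You compare $\widetilde{D}_1'$ and $\widetilde{D}_1$ only at the very end, at which point all you know is that $B(\widetilde{D}_1')$ and $B(\widetilde{D}_1)$ are isotopic \emph{in $M$} and share the core $L(D_1)$; from this you infer that their net signed half-twist counts agree on each solid-torus component. That inference requires the two bands to be \emph{properly isotopic within $U(K)$}, which is what Proposition~\ref{torus_double_twist:prop} classifies; ambient isotopy in $M$ does not imply proper isotopy in the tube. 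An isotopy of $M$ carrying one band to the other restricts on the core to a self-isotopy of $K$, and such loops can act non-trivially on framings: for $K=S^1\times\{\mathrm{pt}\}$ in $S^1\times S^2$ the framing is only well defined modulo $2$ (light-bulb/belt trick), so two band-diagrams with the same underlying $D_1$ whose net half-twist counts differ by $4$ give isotopic bands yet are not related by $A$, $B$, $Z$ alone. (They \emph{are} related by the full move set, via a sequence that drags the diagram around the manifold --- which is exactly what your argument does not produce.)

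The paper avoids this by never letting the comparison become global: it follows the \emph{given} band isotopy $B(t)$, chooses at each catastrophe time $t_k$ a band-diagram $\widetilde{D}_k$ whose band is properly isotopic to $B(t_k)$ \emph{inside the tube} $U_k$, and shows that the lifted move produces a $\widetilde{D}_k'$ whose band is properly isotopic to $B(\widetilde{D}_k)$ inside $U_k$; only then does Proposition~\ref{torus_double_twist:prop} apply to equate the half-twist counts, step by step. Two further points to make explicit if you repair the argument along these lines: (i) the sequence of link moves must be derived from the core of the given band isotopy (as in the proof of Theorem~\ref{main:link:thm}), not obtained by invoking that theorem as a black box, since an unrelated sequence corresponds to a different isotopy of the core and reintroduces exactly the framing ambiguity above; and (ii) when the non-elementary catastrophes are replaced by compositions of the basic moves, the band isotopy itself has to be modified accordingly so that the band continues to be tracked through the tubes.
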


The link analogue of this result (Theorem~\ref{main:link:thm}) was proved by analyzing 
the elementary catastrophes  that the projection to $\Sigma$ of the link undergoes along an isotopy.
Doing the same for bands seems however impossible, because even for a band isotopically moving within
$\Sigma$ the times at which the band is not associated to a diagram are not isolated (see for instance 
Fig.~\ref{band-F1:fig},
\begin{figure}
\figfatta{band-F1}
{\includegraphics[scale=0.6]{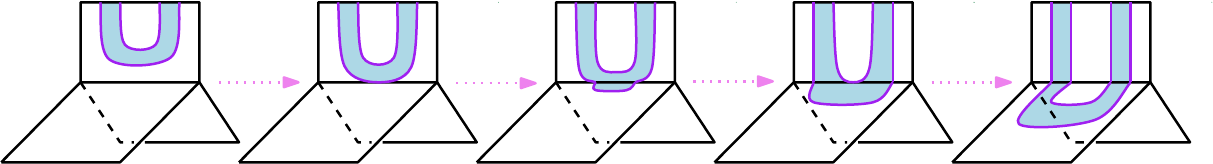}}
{An isotopy of a band contained in $\Sigma$.\label{band-F1:fig}}
\end{figure}
which corresponds to a move $F_1$). So we will need to use a different strategy, using Theorem~\ref{main:link:thm}
rather than mimicking its proof.
We begin with an easy fact which uses the move $\tau$ introduced before
Proposition~\ref{klein_isotopy_double_twist:prop}.

\begin{prop}\label{torus_double_twist:prop}
If $K$ is a knot in $M$ with $U(K)$ a solid torus and $B$ is a proper band in $U(K)$,
then $\{\tau^n(B):\ n\in\matZ\}$ are pairwise not properly isotopic in $U(K)$.
\end{prop}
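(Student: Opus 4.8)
The plan is to extract from a proper band $B$ a framing of its core, recorded as the slope of $\partial B$ on the boundary torus, and to show that the twist move $\tau$ shifts this framing by a fixed nonzero amount. I fix a homeomorphism $U(K)\cong S^1\times D^2$ with core $K=S^1\times\{0\}$, so that $T:=\partial U(K)$ is a torus carrying a longitude $\lambda$ (a parallel copy of $K$) and a meridian $\mu$ (bounding a disc in $U(K)$), forming a basis of $H_1(T;\matZ)$. Because $B$ is proper with core $K$, its boundary $\partial B=B\cap T$ is an embedded $1$-manifold each of whose components runs parallel to $K$. Orienting $\partial B$ compatibly with a fixed orientation of $K$ (via the retraction $B\to K$), I obtain a well-defined class $[\partial B]\in H_1(T;\matZ)$, equal to $\lambda+f\mu$ on each of the two components when $B$ is a cylinder, and to $2\lambda+m\mu$ with $m$ odd when $B$ is a M\"obius strip. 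In both cases the meridional coefficient defines a framing $\varphi(B)\in\tfrac12\matZ$ (set $\varphi(B)=f$, respectively $\varphi(B)=m/2$).

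Next I would show that $\varphi$ is invariant under proper isotopy. By the isotopy extension theorem, a proper isotopy from $B$ to $B'$ extends to an ambient isotopy $(h_t)_{t\in[0,1]}$ of $U(K)$ with $h_0=\mathrm{id}$ and $h_1(B)=B'$. Then $h_1|_T$ is isotopic to $\mathrm{id}_T$ through $(h_t|_T)$, hence acts as the identity on $H_1(T;\matZ)$; transporting the orientation of $\partial B$ along $h_t$, I conclude $[\partial B']=(h_1)_*[\partial B]=[\partial B]$, so $\varphi(B')=\varphi(B)$. The crucial point here, and the reason the \emph{signed} (rather than merely unoriented) slope is preserved, is that a proper isotopy is isotopic to the identity and therefore cannot reflect the meridian; this is exactly what prevents a band from being identified with its mirror twist.

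Finally I would compute the effect of $\tau$. Inserting a full twist in a trivialized ball $D^1\times D^2$ meeting $B$ in $D^1\times D^1\times\{0\}$ adds one full revolution of the band about the longitudinal direction, augmenting the meridional winding of $\partial B$ by a fixed nonzero integer; thus $\varphi(\tau(B))=\varphi(B)+\delta$ with $\delta\neq0$ constant (a direct check gives $\delta=\pm1$ for cylinders and, after the $\tfrac12$ normalization, $\delta=\pm1$ for M\"obius strips, since there $\partial B$ double covers $K$ and gains two meridional wraps). Iterating, $\varphi(\tau^n(B))=\varphi(B)+n\delta$ takes pairwise distinct values, so the bands $\tau^n(B)$ are pairwise not properly isotopic, which is the assertion. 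I expect the main obstacle to be the orientation bookkeeping just described: pinning down the sign of $\varphi$ intrinsically (so that proper isotopy preserves it while a mirror move would not) and verifying the twist count for $\tau$ in the M\"obius case, where $\partial B$ is a single doubly wrapping curve rather than two parallel ones. This is also precisely where the present situation diverges from Proposition~\ref{klein_isotopy_double_twist:prop}: in the solid Klein bottle the twist $\tau$ is isotopically trivial, whereas in the solid torus it is detected by the framing $\varphi$.
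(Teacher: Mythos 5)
Your proposal is correct and takes essentially the same route as the paper: the paper's proof simply records that $\partial\tau^n(B)$ represents $\lambda+n\mu$ (cylinder) or $2\lambda+(2n+1)\mu$ (M\"obius strip) in the first homology of $\partial U(K)$, and concludes from the distinctness of these classes. You additionally spell out why the signed boundary slope is a proper-isotopy invariant (isotopy extension plus triviality of the induced action on $H_1$ of the boundary torus), which the paper leaves implicit; this is a useful amplification rather than a divergence.
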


\begin{proof}
With respect to a suitable longitude-meridian pair $(\lambda,\mu)$ on $\partial U(K)$,
we have that:
\begin{itemize}
  \item If $B$ is a cylinder, either component of $\partial \tau^n(B)$ represents $\lambda+n\cdot \mu$ in $H^1(\partial U(K))$;
  \item If $B$ is a M\"obius strip, $\partial \tau^n(B)$ represents $2\lambda+(2n+1)\cdot \mu$ in $H^1(\partial U(K))$.
\end{itemize}
This implies the conclusion.
\end{proof}

\dimo{main:band:trivalent:thm}
Every band has a diagram on $\Sigma$ and the listed moves preserve the isotopy class of the band of a diagram,
so we must prove that diagrams $\widetilde{D},\ \widetilde{D}'$ giving isotopic bands are related by the moves.
Let  $(B(t))_{t\in[0,1]}$ 
be an isotopy of bands with $B(0)=B(\widetilde{D}),\ B(1)=B(\widetilde{D}')$.
Let $L(t)$ be the core of $B(t)$ and assume up to small perturbation that $(L(t))_{t\in[0,1]}$ is generic with respect to the projection $\rho$ to $\Sigma$.
This means that there exist times $0=t_0<t_1<\ldots <t_N=1$ such that if $L_k=L(t_k)$ then
$\rho(L_k)$ is the support of a diagram $D_k$ of $L_k$, and
for $k=1,\ldots,N$ we have that 
$\rho(L_{k-1})$ and $\rho(L_k)$ are related 
by one of the elementary catastrophes discussed in the proof of Theorem~\ref{main:link:thm}.
A priori all the catastrophes can arise, not only those
giving the moves 
$R_1$, $R_2$, $R_3$, $C$, $F_1$, $F_2$, $V$ of the statement, but 
we can modify the band isotopy so that the projection of its core is modified as in
Figg.~\ref{S-move-generated:fig},~\ref{move2312generated:fig},~\ref{move3312generated:fig}, or~\ref{move3321generated:fig},
so we can assume that $D_{k-1}$ is transformed into $D_k$ by one of the moves
$R_1$, $R_2$, $R_3$, $C$, $F_1$, $F_2$, $V$ or the inverse of one of them.

We can now take for all $t$ a small tubular neighbourhood $U(t)$ of $L(t)$, noting that
$B(t)$ can be viewed as properly embedded in $U(t)$, 
and a diffeomorphism $\psi(t):U(0)\to U(t)$ such that $\psi(t)(L(0))=L(t)$, $\psi(t)(B(0))=B(t)$
and $\psi$ varies continuously with $t$. We then set 
$B_k=B(t_k)$, $U_k=U(t_k)$ and $\Psi_k=\psi(t_k)\compo\psi(t_{k-1})^{-1}:U_{k-1}\to U_{k}$, so that $\Psi_k(B_{k-1})=B_k$.
Next, we turn $D_k$ into a band-diagram $\widetilde{D}_k$ by adding global labels ``c'' or ``m'' and/or
half-twist points so that $B(\widetilde{D}_k)$ is properly isotopic to $B_k$ within $U_k$.
By construction $\Psi_k(B(\widetilde{D}_{k-1}))$ is properly isotopic to $B(\widetilde{D}_k)$ within $U_k$.
By applying to $\widetilde{D}_{k-1}$ the band version of the move that transforms 
$D_{k-1}$ into $D_k$ we get some other band version $\widetilde{D}'_k$ of $D_k$,
and $B(\widetilde{D}'_k)$ is properly isotopic to $\Psi_k(B(\widetilde{D}_{k-1}))$ within $U_k$, whence to $B(\widetilde{D}_k)$.
Note that $\widetilde{D}'_k$ and $\widetilde{D}_k$ both reduce to $D_k$ forgetting labels and half-twist points.
On each portion of $D_k$ corresponding to a component of $L_k$ with $U_k$ a solid Klein bottle 
we have that $\widetilde{D}'_k$ and $\widetilde{D}_k$ have the same global label.
On each portion of $D_k$ corresponding to a component of $L_k$ with $U_k$ a solid torus
we can apply the moves $A$ and $B$ to shift all the half-twist points of 
$\widetilde{D}'_k$ and $\widetilde{D}_k$ on one and the same arc of $D_k$. 
Locally we can define one type of half-twist to be positive, and the other one to be negative,
and Proposition~\ref{torus_double_twist:prop} implies that $\widetilde{D}'_k$ and $\widetilde{D}_k$
algebraically have the same number of half-twist points, which easily implies that they are related 
by some moves $Z$. We have shown that $\widetilde{D}_{k-1}$ and $\widetilde{D}_k$ are related by moves
as in the statement, but for $\widetilde{D}_0$ we can choose $\widetilde{D}$ and for $\widetilde{D}_N$ we can choose 
$\widetilde{D}'$, and the proof is complete.
\finedimo

As we did in the previous section, we could consider band-diagrams on a special spine $\Sigma$ of $M$ 
with at least two vertices, and allow $\Sigma$ to vary, getting a direct analogue of Theorem~\ref{MP:link:thm}.
We will not spell out this result explicitly.


\section{Link diagrams on flow-spines}
In this section we describe flow-spines of $3$-manifolds, slightly extending the theory founded in~\cite{Ishii1986, Ishii1992, BP-LNM}
(see also~\cite{Koda2007, Ishii2023, Ishii2024, Ishii2024bis} for unrelated recent developments of this theory).
We then define link diagrams on such spines and provide combinatorial moves on diagrams
embodying link isotopy.

\paragraph{Flow-spines}
Let $M$ be a fixed smooth $3$-manifold with non-empty boundary. 
We call \emph{generic concave traversing flow on} $M$ a partition $\calF$ of $M$ into oriented 
smooth arcs such that:
\begin{itemize}
\item There exists a smooth non-singular vector field on $M$ whose orbits are the elements of $\calF$;
\item Every $a\in\calF$ is tangent to $\partial M$ in at most two points, internal to $a$;
\item If $a\in\calF$ is tangent to $\partial M$ at $j$ internal points (so it is called a $j$-orbit) then $(M,\calF)$
in a neighbourhood of $a$ is diffeomorphic to one of the models of
Fig.~\ref{orbit_models:fig}.
\begin{figure}
\figfatta{orbit_models}
{\includegraphics[scale=0.6]{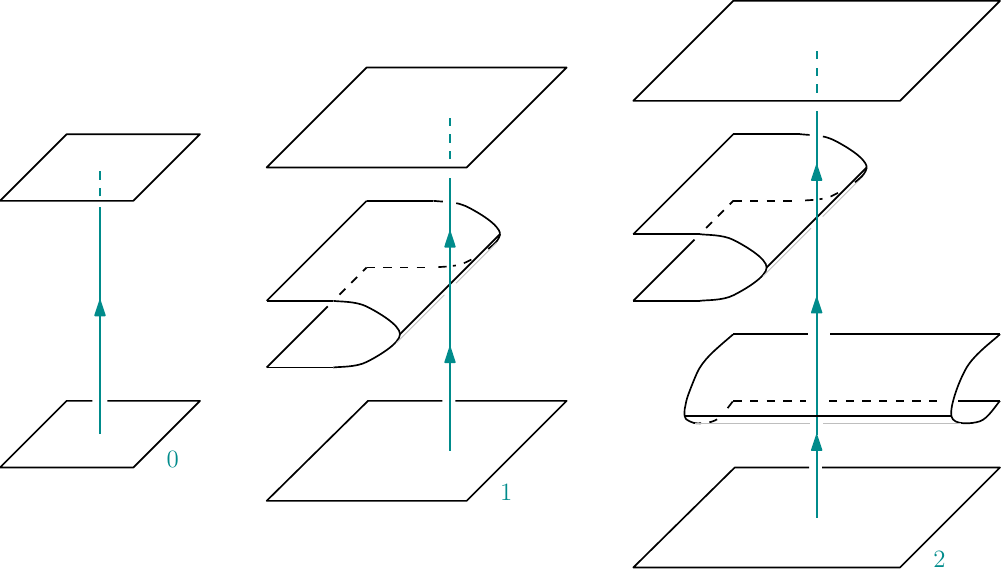}}
{The neighbourhood of a $j$-orbit $a$ in a generic concave traversing flow. 
Only $a$ is displayed, but also all the nearby orbits are those of an upward
vertical vector field.\label{orbit_models:fig}}
\end{figure}
\end{itemize}
We define $\partial_+M$ as the closure of the set of
final points of the orbits in $\calF$, and $\varphi:M\to\partial_+(M)$ as the map 
that associates to $x$ the final point of $a\in\calF$ if $x\in a$.
We then define the \emph{flow-spine} of $\calF$ as the following object $\Sigma$:
\begin{itemize}
\item Topologically, $\Sigma$ is $\partial_+(M)$ with every $x\in\partial(\partial_+M)$ identified to $\varphi(x)$, so
in particular $\Sigma$ is an almost special polyhedron;
\item $\Sigma$ is given a structure of branched surface and then pushed inside $M$ as shown in Fig.~\ref{flowspineconstruction:fig}
(which displays all the passages embedded in $M$ for a $1$-orbit, and only the final result for a $2$-orbit as in Fig.~\ref{orbit_models:fig}).
\begin{figure}
\figfatta{flowspineconstruction}
{\includegraphics[scale=0.6]{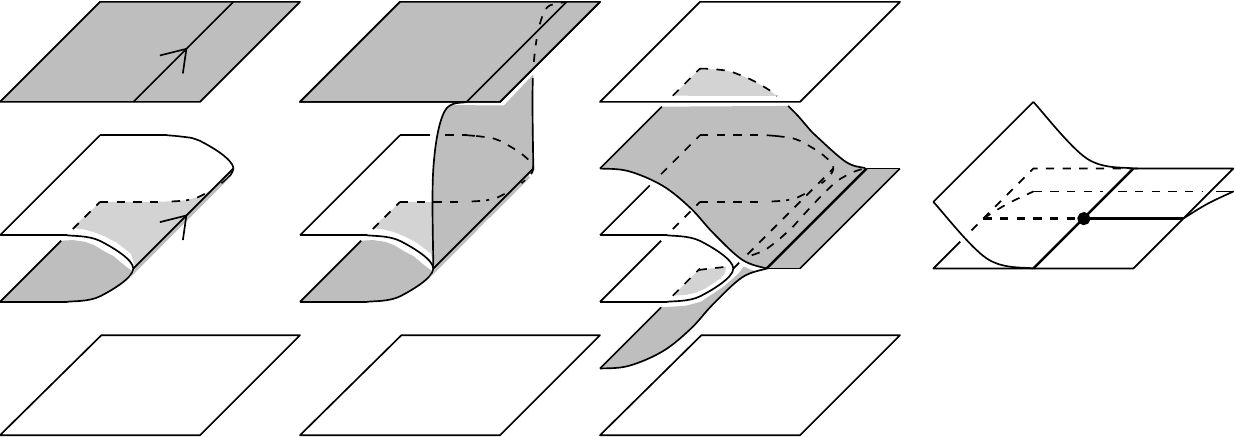}}
{Construction of the flow-spine of a flow.\label{flowspineconstruction:fig}}
\end{figure}
\end{itemize}

Intrinsically, a flow-spine of $M$ can be defined as an almost special polyhedron $\Sigma$ embedded in $M$ so that: 
\begin{itemize}
\item $M$ is a regular neighbourhood of $\Sigma$;
\item $\Sigma$ is endowed with a structure of
branched surface and transverse orientations for the components of $\Sigma^{(2)}$ that match along $\Sigma^{(1)}$ with
respect to the branching.
\end{itemize}
Of course from such a $\Sigma$ one can reconstruct a generic concave traversing flow on $M$.

\begin{rem}\label{changing:flow-spine:rem}
\emph{In the literature a flow-spine is usually also required to be special, but we can drop this restriction here.
Moreover, it was shown in~\cite{Ishii1986, BP-LNM} that if $N$ is closed and $v$ is a non-singular vector field on $N$
then there exists a closed ball $B$ in $N$ such that $(B,v)$ is diffeomorphic to $D^3$ with a constant vector field
and $v$ restricted to the complement $M$ of the interior of $B$ gives a generic concave traversing flow with 
associated special flow-spine $\Sigma$. Moreover in~\cite{Ishii1992, BP-LNM, Costantino} certain moves were introduced that allow 
to transform into each other two different such $\Sigma$'s for the same $(N,v)$, and also two different $\Sigma$'s for
the same $N$ and possibly different $v$'s.
}\end{rem}

\paragraph{Flow-link diagrams and moves}
Let $\Sigma$ be a fixed flow-spine of $M$ for a fixed flow $\calF$ as above. 
We call \emph{flow-diagram} on $\Sigma$ one which is a link diagram on $\Sigma$ as an almost special spine of $M$
(see Section~\ref{lins:section}) such that: (1) At the transverse intersections with $\Sigma^{(1)}$ the union of the two involved arcs of the
diagram is a smooth arc, so one of them lies in the locally one-sheeted portion of $\Sigma$ and the other one in the locally $2$-sheeted portion; 
(2) At the crossings the transverse orientation is always that intrinsic to $\Sigma$
(so it need not be shown).
 
A flow-diagram $D$ on $\Sigma$ defines a link $L(D)$ in $M$. To prove that every link arises as $L(D)$ for some $L$ we give some definitions.
Let $W$ be the union of the $1$- and $2$-orbits in $\calF$, and $A$ be the union of the $2$-orbits.
Note that $W\setminus A$ is a surface such that $\partial M\cap (W\setminus A)$ is $\partial (W\setminus A)$ disjoint union some
arcs and circles along which $\partial M$ and $W\setminus A$ are tangent to each other. Moreover the tangency points
to $\partial M$ split each $2$-orbit in $A$ into three segments, and $W\setminus A$ is triply incident
to the first and last of these segments and quadruply to the central one (see Fig.~\ref{thewall:fig}).
\begin{figure}
\figfatta{thewall}
{\includegraphics[scale=0.6]{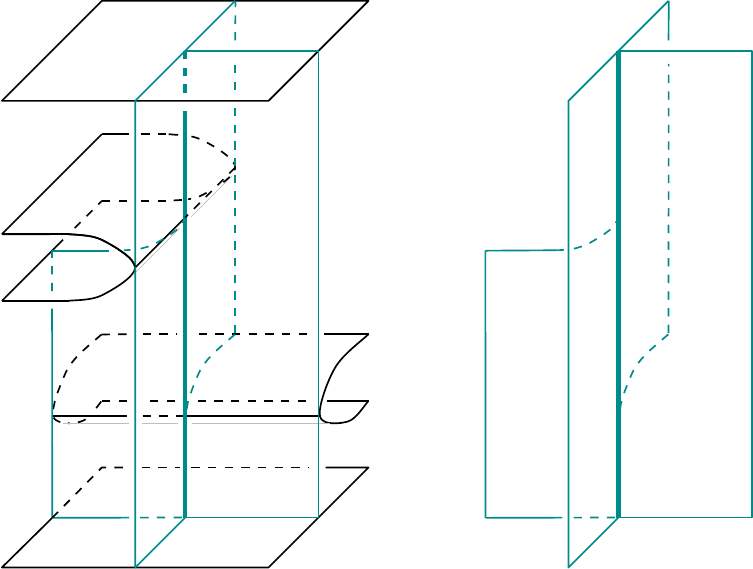}}
{$W$ near the $2$-orbit of Fig.~\ref{orbit_models:fig}, 
shown both embedded in $M$ and by itself.\label{thewall:fig}}
\end{figure}
We now say that
$L$ in $M$ is \emph{generic} with respect to $\calF$ if the following happens:
\begin{enumerate}
\item[(1)] $L$ is disjoint from $A$;
\item[(2)] $L$ intersects $W\setminus A$ transversely;
\item[(3)] $L$ intersects each $a\in\calF$ in at most two points;
\item[(4)] $L$ intersects each $a\in\calF$ transversely;
\item[(5)] If $L$ intersects $a\in\calF$ at two points $x_1,x_2$ then: (5.1) $a$ is a $0$-orbit; 
(5.2) if $\ell_1,\ell_2$ are small subarcs of $L$ centered at $x_1,x_2$ then $\varphi(\ell_1)$ and $\varphi(\ell_2)$ 
are transverse to each other on $\partial_+(M)$ at $\varphi(x_1)=\varphi(x_2)$.
\end{enumerate}
Note that any link $L$ becomes generic up to small perturbation. Moreover for a generic $L$ we can turn $\varphi(L)$
into a flow-diagram $D$ by specifying, in the situation of condition (5.2), that $\varphi(\ell_j)$ is the overarc
if $x_j$ is the last point on $a$ of $L\cap a$. For this $D$ we have that $L(D)$ is isotopic to $L$.
We now prove:

\begin{thm}\label{main:flow:thm}
Links in $M$ up to isotopy correspond bijectively to flow-diagrams on $\Sigma$ up to isotopy on $\Sigma$,
the Reidemeister moves $R_1$, $R_2$ and $R_3$ 
and the moves $F_1^{0,+},\ F_1^{0,-},\ F_1^{+,0},\ F_1^{-,0},\ C_0,\ S_+,\ S_-,\ V_+,\ V_0,\ V_-$ of 
Fig.~\ref{flowmoves:fig}.
\begin{figure}
\figfatta{flowmoves}
{\includegraphics[scale=0.6]{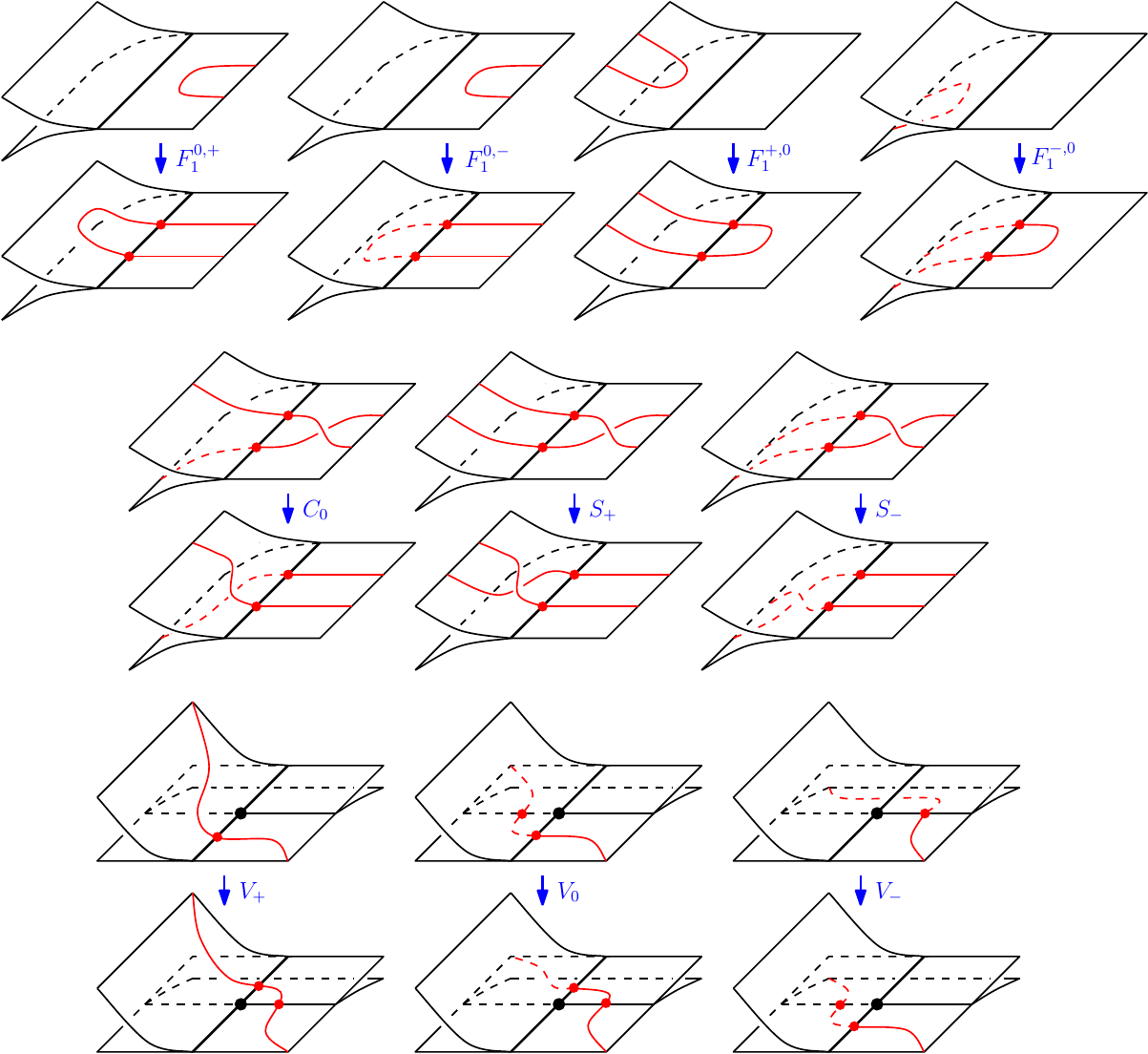}}
{Moves on flow-diagrams.\label{flowmoves:fig}}
\end{figure}
\end{thm}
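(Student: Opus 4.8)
The plan is to mimic the proof strategy of Theorem~\ref{main:link:thm}, analyzing the elementary catastrophes that the image $\varphi(L_t)$ undergoes along a generic isotopy $(L_t)_{t\in[0,1]}$ of links in $M$, but now with respect to the specific structure carried by a flow-spine. The key difference from the almost special spine case is that a flow-spine is a branched surface with transverse orientations matching along $\Sigma^{(1)}$, so the projection map is now the flow map $\varphi$ rather than the general spine projection $\rho$, and the transverse orientation at crossings is forced to be the intrinsic one. First I would make the isotopy generic with respect to $\varphi$, so that for all but finitely many times $t_1<\ldots<t_N$ the image $\varphi(L_t)$ supports a flow-diagram $D_t$ evolving by isotopy on $\Sigma$, while at each $t_j$ a first-order violation of the genericity conditions (1)--(5) in the definition of a generic link occurs.

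The main body of the proof then consists of enumerating these violations and matching them to the moves in Fig.~\ref{flowmoves:fig}. I would organize the catastrophes according to which of the conditions (1)--(5) fails to first order: a failure of condition (1) or of the transversality to $W\setminus A$ in (2) produces a finger move pushing a strand across $\Sigma^{(1)}$, and because the branching of $\Sigma$ distinguishes the locally one-sheeted from the locally two-sheeted side, each such finger move comes in four flavors according to which side the strand starts on and ends on, giving exactly $F_1^{0,+}$, $F_1^{0,-}$, $F_1^{+,0}$, $F_1^{-,0}$. A failure of (3), where a strand acquires a third intersection with some orbit, or a non-transverse second intersection as in (4)--(5), produces a crossing being created or destroyed; here the orientation constraint that the last point on the orbit is the overarc pins down the decoration and yields the clearing move $C_0$ together with the sliding moves $S_+$, $S_-$. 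Finally, the passages of the diagram across or around a vertex of $\Sigma$ (the counterpart of case (3.3) in the proof of Theorem~\ref{main:link:thm}) produce the vertex moves $V_+$, $V_0$, $V_-$, the subscripts recording the local branching data, and the ordinary Reidemeister catastrophes in the interior of a face give $R_1$, $R_2$, $R_3$ exactly as before.

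The step I expect to be the main obstacle is the careful bookkeeping of the branching structure and the transverse orientations: on a flow-spine one is not free to choose the transverse orientation at a crossing, so several of the configurations that appeared as a single move in the almost special case now split according to the coorientation carried by the branched surface, while conversely some configurations that looked distinct for an arbitrary spine are forced to coincide or to be excluded here. Verifying that every first-order catastrophe compatible with the flow-spine structure is realized by a generic link isotopy --- and that no spurious move is needed --- requires checking, for each local model of Fig.~\ref{orbit_models:fig} and each incidence of $\Sigma^{(1)}$, that the induced transition on $\varphi(L_t)$ is exactly one of the listed moves and that the overarc/underarc convention is respected. As in the earlier proof, I would also need to show that the potentially redundant configurations (the analogues of the moves of Figg.~\ref{S-move-generated:fig}, \ref{move2312generated:fig}, \ref{move3312generated:fig}, \ref{move3321generated:fig}) either are generated by the listed moves or do not arise as generic catastrophes, so that the list in the statement is both sufficient and free of redundancy beyond what is claimed.
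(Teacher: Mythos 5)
Your overall strategy is exactly the paper's: make the isotopy generic with respect to the flow, match each first-order violation of the genericity conditions (1)--(5) to one of the listed moves, and show that the leftover vertex configurations are generated by the others. However, your assignment of violations to moves is wrong in two places, and this is precisely the bookkeeping you yourself identify as the main content of the proof. First, condition (1) states that $L$ avoids $A$, the union of the $2$-orbits, and these are the orbits whose images are the vertices of $\Sigma$; a violation of (1) is therefore the vertex catastrophe, producing $V_+,\ V_0,\ V_-$ (together with the moves $V'_+,\ V'_0,\ V'_-$, which must be shown to be generated by the $V_*$'s and the $F_1^{*,*}$'s), and not a finger move. The four finger moves $F_1^{*,*}$ arise only from violations of (2), i.e.\ tangencies of $L_t$ to $W$ along a $1$-orbit $a$, the four flavours being determined by whether the tangency point lies on $a$ before or after the tangency of $a$ to $\partial M$ and by the side of $W$ on which $L_t$ locally lies --- not by ``which side the strand starts on and ends on.'' Second, $C_0$, $S_+$ and $S_-$ do not come from violations of (3) or from non-transverse second intersections: they come from violations of (5.1), namely the situation where the two intersection points of $L_t$ with a single orbit lie on a $1$-orbit rather than a $0$-orbit, the three moves corresponding to both points lying before, one before and one after, or both after the tangency of that orbit to $\partial M$. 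In the paper's scheme, violation of (4) gives $R_1$, violation of (5.2) gives $R_2$, and violation of (3) gives $R_3$. With these corrections your outline coincides with the paper's proof; as written, though, it would leave the vertex moves without a source among the genericity conditions and would misidentify the origin of the crossing-clearing and sliding moves.
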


\begin{proof}
We have seen above that any link has a flow-diagram, and of course the listed moves preserve the isotopy class of the link of a flow-diagram.
To conclude we must show that if $D_0$ and $D_1$ are flow-diagrams on $\Sigma$  and there exists an isotopy 
$(L_t)_{t\in[0,1]}$ between $L_0=L(D_0)$ and $L_1=L(D_1)$ in the interior of $M$ then
$D_0$ and $D_1$ are related by the listed moves. Assuming the isotopy is generic,
we must examine which elementary catastrophes can occur along it.

A catastrophe is a violation of one of the genericity conditions, and we can assume that only one occurs at each time $t$.
If (4) is violated we can assume that $L_t$ is tangent to some $0$-orbit $a\in\calF$
at one point, which gives the move $R_1$. If (5.2) is violated then we have $R_2$. If (3) is 
then we can assume that $L_t$ intersects thrice transversely some $0$-orbit $a\in\calF$
and that the $\varphi$-images of the intersection arcs are transverse on $\partial_+M$, and we have $R_3$.
If (2) is violated we can assume that $L_t$ is tangent to $W$ at a single point $x$ of a $1$-orbit $a$ and that
$L_t$ is transverse to $a$ at $x$; depending on whether $x$ lies on $a$ before or after the tangency point to $\partial M$ 
and on the side of $W$ to which $L_t$ locally lies we get the four moves $F_1^{*,*}$.
If (5.1) is violated we can assume that $a$ is a $1$-orbit, that $L_t$ is transverse to $W$ at $x_1,x_2$ and that
the $\varphi$-images of small arcs of $L_t$ containing $x_1,x_2$ are transverse to each other;
now $x_1,x_2$ can be both before the tangency point of $a$ to $\partial M$,
one before and one after, or both after, and correspondingly we get $S_-,\ C_0,\ S_+$.

The violations of (1) require a little more discussion. So, suppose a small arc $\ell\subset L_t$
intersects a $2$-orbit $a$ at a point $x$. First of all we can assume that $\ell$ is transverse to $a$ at $x$ 
and to the (3 or 4  --- see Fig.~\ref{thewall:fig}) folds of $W$ incident to $a$ at $x$.
We now need to distinguish two things:
\begin{itemize}
\item If $\sigma$ is the local face of $\Sigma$ at the vertex corresponding to $a$
that is locally one-sheeted along both its sides, then $\varphi(\ell)$ may or not contain an arc in $\sigma$;
\item $x$ can belong to the first, central or last portion into which the tangency points to $\partial M$ divide $a$.
\end{itemize}
If $\varphi(\ell)$ does contain an arc in $\sigma$, depending on the portion of $a$ to which $x$ belongs 
we get the moves $V_-,\ V_0,\ V_+$. If it does not, we get the moves $V'_-,\ V'_0,\ V'_+$ shown in
Fig.~\ref{generatedflowVs:fig}
\begin{figure}
\figfatta{generatedflowVs}
{\includegraphics[scale=0.6]{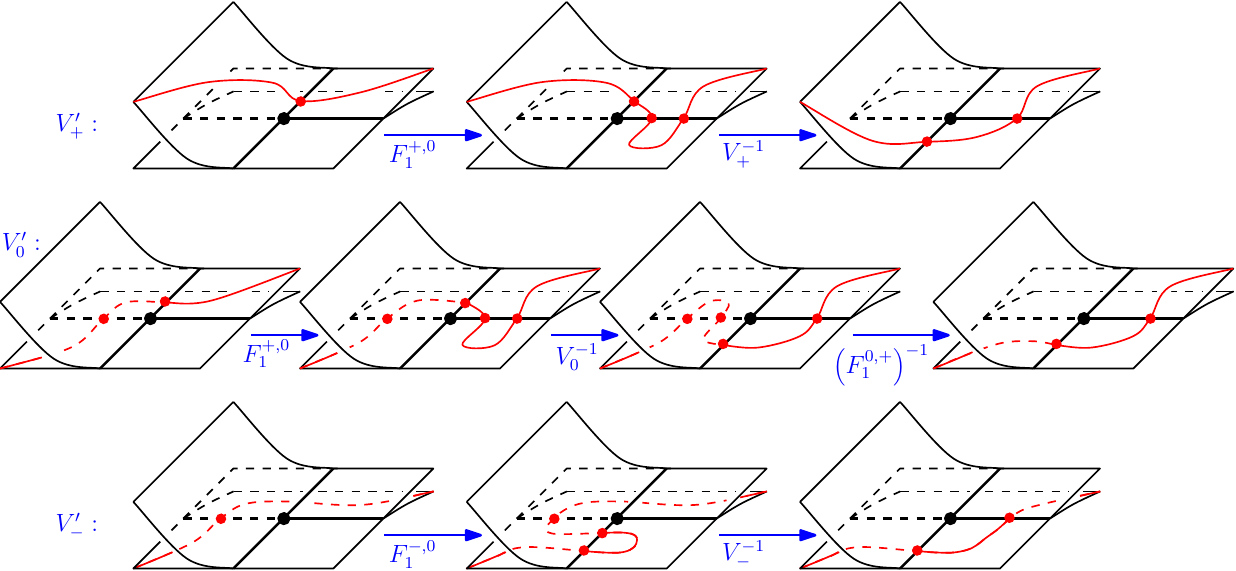}}
{Generating flow vertex moves by other moves.\label{generatedflowVs:fig}}
\end{figure}
to be generated by the $V_*$'s and the $F_1^{*,*}$'s.
\end{proof}

The moves of~\cite{Ishii1992, BP-LNM, Costantino}
referred to in Remark~\ref{changing:flow-spine:rem}, 
that allow to relate special flow-spines of the same manifold, are a lot
more complicated that the single MP-move of Fig.~\ref{MP-move:fig}.
One could nonetheless state an analogue of Theorem~\ref{MP:link:thm}
for link diagrams on varying special flow-spines, but we will 
not do this explicitly. The same will apply in the next section for
band-diagrams on special flow-spines.


\section{Band diagrams on flow-spines}
In this section we provide a combinatorial presentation of bands in $M$ via diagrams on a flow-spine $\Sigma$ of $M$.
This will be just a combination of the contents of the previous two sections, but then we will specialize our result
to the case where $M$ is a homology sphere, getting a better presentation.

\paragraph{Flow-band-diagrams and moves}
A \emph{flow-band-diagram} $\widetilde D$ on $\Sigma$ is a flow-diagram $D$ with the addition
of global labels ``c'' or ``m'' on portions of $D$ giving components of $L(D)$ with
$U(L(D))$ a solid Klein bottle, and half-twist points on the rest of $D$, except that
now a transverse orientation is intrinsically defined on $\Sigma$, so a 
half-twist point is only decorated by a local orientation of $D$ and a local 
transverse orientation of $D$ on $\Sigma$, up to simultaneous reversal of both.

Note that the band move $\widetilde{R}_1$ of Fig.~\ref{R1_F2-move-framed:fig}
makes sense for flow-band-diagrams, and the same applies to the band moves $B$ and $Z$
and the flow-moves $F_1^{*,*},\ C_0,\ S_\pm,\ V_*$.
Instead, the band move $A$ of Fig.~\ref{A-move-framed:fig} gives rise to two flow-band-diagrams $A_\pm$,
because the local portion of diagram along which the half-twist point slides past $\Sigma^{(1)}$ may
have an arc on the upper or lower sheet of the locally two-sheeted portion of $\Sigma$.
For all these flow-band versions of the moves $\widetilde{R}_1$, $A_\pm$, $B$ and $Z$ one should
insist that the intrinsic transverse orientation of $\Sigma$ is used at half-twist points.

\begin{thm}\label{main:band:flow:thm}
Bands in $M$ up to isotopy correspond bijectively to 
flow-band-diagrams on $\Sigma$ up to isotopy on $\Sigma$ and the 
moves $\widetilde{R}_1$,  $R_2$, $R_3$,
$F_1^{*,*},\ C_0$,\ $S_\pm$,\ $V_*$, $A_\pm$, $ B$, $ Z$.
\end{thm}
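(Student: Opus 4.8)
The plan is to prove Theorem~\ref{main:band:flow:thm} by combining the strategy used for Theorem~\ref{main:band:trivalent:thm} with the flow-spine analysis of Theorem~\ref{main:flow:thm}, rather than by directly analyzing catastrophes (which, as noted after Theorem~\ref{main:band:trivalent:thm}, fails for bands because the non-generic times need not be isolated). First I would note that every band has a flow-band-diagram: take a flow-diagram $D$ of the core (which exists by the discussion preceding Theorem~\ref{main:flow:thm}), then add global labels ``c''/``m'' on the solid-Klein-bottle components and, on the solid-torus components, insert half-twist points to adjust the framing, exactly as in Proposition~\ref{all:framed:arise}. Conversely the listed moves are readily checked to preserve the isotopy class of the associated band, so the content is the reverse implication.

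\smallskip

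For the reverse implication, suppose $\widetilde D,\ \widetilde D'$ give isotopic bands and let $(B(t))_{t\in[0,1]}$ be an isotopy with core $(L(t))_{t\in[0,1]}$ made generic with respect to the flow $\calF$. By Theorem~\ref{main:flow:thm} there are times $0=t_0<\ldots<t_N=1$ such that the flow-diagrams $D_k$ of $L_k=L(t_k)$ are related in succession by the flow-moves $R_1,R_2,R_3,F_1^{*,*},C_0,S_\pm,V_*$ (or their inverses). As in the proof of Theorem~\ref{main:band:trivalent:thm}, I would choose tubular neighbourhoods $U(t)$ of $L(t)$ with $B(t)$ properly embedded, and diffeomorphisms $\psi(t):U(0)\to U(t)$ carrying $B(0)$ to $B(t)$, setting $\Psi_k=\psi(t_k)\compo\psi(t_{k-1})^{-1}$ so that $\Psi_k(B_{k-1})=B_k$. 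I then promote each $D_k$ to a flow-band-diagram $\widetilde D_k$ with $B(\widetilde D_k)$ properly isotopic to $B_k$ in $U_k$, apply to $\widetilde D_{k-1}$ the flow-band version of the link move taking $D_{k-1}$ to $D_k$ to obtain a competitor $\widetilde D'_k$, and compare $\widetilde D'_k$ with $\widetilde D_k$: they agree underlying $D_k$, have equal global labels on the Klein-bottle pieces, and on each solid-torus piece I use the moves $A_\pm$ and $B$ to collect all half-twist points onto a single arc, whereupon Proposition~\ref{torus_double_twist:prop} forces equal algebraic half-twist counts, so they differ by moves $Z$. Taking $\widetilde D_0=\widetilde D$ and $\widetilde D_N=\widetilde D'$ finishes the chain.

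\smallskip

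The step I expect to be the main obstacle is the bookkeeping at the transition moves that pass through the singular locus of $\Sigma$, namely $F_1^{*,*}$, $S_\pm$, $C_0$ and especially the vertex moves $V_*$. For Theorem~\ref{main:band:trivalent:thm} the spine has no branching and the move $A$ has a single form; here the intrinsic transverse orientation of $\Sigma$ means a half-twist point's decoration must always be read off using that orientation, and the passage of a half-twist point across $\Sigma^{(1)}$ splits into the two cases $A_+$ and $A_-$ according to which sheet of the locally two-sheeted region the diagram occupies. The care needed is to verify that when $D_{k-1}\to D_k$ is one of the flow-moves crossing $\Sigma^{(1)}$, the induced band version $\widetilde D'_k$ is genuinely obtained from $\widetilde D_{k-1}$ by a \emph{listed} move (with the correct $A_\pm$ variant) and that the normalization of half-twist points onto a single arc is still available after the transition; once this is checked, the reduction to Proposition~\ref{torus_double_twist:prop} proceeds verbatim as before.
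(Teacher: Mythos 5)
Your proposal is correct and follows essentially the same route as the paper: the paper's proof likewise reduces the band isotopy to a generic core isotopy whose flow-diagram changes only by the listed link moves (absorbing the extra vertex catastrophes via the generated moves of Fig.~\ref{generatedflowVs:fig}), then promotes each intermediate diagram to a flow-band-diagram and compares the two candidates at each step using $A_\pm$, $B$, $Z$ and Proposition~\ref{torus_double_twist:prop}, exactly as in Theorem~\ref{main:band:trivalent:thm}. The bookkeeping concerns you flag about the $A_\pm$ variants and the intrinsic transverse orientation are precisely the points the paper treats as routine.
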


\begin{proof}
As always, the only non-trivial point is to show that flow-band-diagrams defining isotopic bands
are related by moves. This is achieved by the same strategy used in the proof of Theorem~\ref{main:band:trivalent:thm}:
up to small perturbation and the local modifications of Fig.~\ref{generatedflowVs:fig} one can assume that
along the isotopy the core of the bands is the link defined by a flow-diagram on $\Sigma$,
except at finitely many times where a move $D_{k-1}\to D_k$ of type $R_1$,  $R_2$, $R_3$,
$F_1^{*,*},\ C_0,\ S_\pm,\ V_*$ takes place.
Then we insert half-twist points on each $D_k$ to get a flow-band $\widetilde{D}_k$
that defines the appropriate band, we define $\widetilde{D}'_k$ by replacing
$D_{k-1}\to D_k$ with its band version $\widetilde{D}_{k-1}\to \widetilde{D}'_k$ and
we prove that $\widetilde{D}'_k$ and $\widetilde{D}_k$ are related by the moves $A_\pm$, $ B$, $ Z$.
\end{proof}

\paragraph{Framed links in a homology sphere}
Suppose now that $M$ is an integer homology $3$-sphere $N$ minus some open balls.
Let $\Sigma$ be a flow-spine of $M$ and let $D$ be a flow-diagram on $\Sigma$,
viewed as a band-flow-diagram without half-twist points. 
Since $M$ is orientable and $\Sigma$ is transversely oriented, 
we see that $B(D)$ consists of cylinders, \emph{i.e.}, it is a framing on $L(D)$.

\begin{thm}\label{homology:sphere:thm}
Framed links in $M$ correspond bijectively to flow-diagrams on $\Sigma$ up to the moves
$R_2$, $R_3$, $F_1^{*,*},\ C_0,\ S_\pm,\ V_*$ and $U$ of Fig.~\ref{Umove:fig}.
\begin{figure}
\figfatta{Umove}
{\includegraphics[scale=0.6]{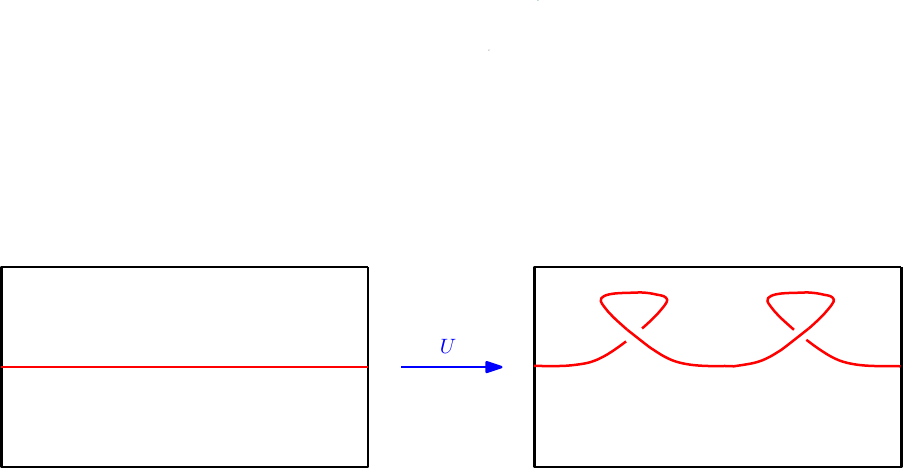}}
{A move on flow-diagrams. Sliding one of the created curls past the other one, which is achieved by moves $R_2$ and $R_3$, 
gives the same move with switched crossings.\label{Umove:fig}}
\end{figure}
\end{thm}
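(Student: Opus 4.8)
The plan is to deduce the statement from Theorem~\ref{main:band:flow:thm} rather than to re-run an isotopy-catastrophe analysis, in the same spirit in which Theorem~\ref{main:band:flow:thm} was itself deduced from Theorem~\ref{main:flow:thm}. First I would dispose of the easy points. Since $M$ is orientable, every component of $L(D)$ has a solid-torus regular neighbourhood, so no ``c''/``m'' label ever occurs and a flow-diagram $D$ regarded without half-twist points does yield a well-defined framing $B(D)$, as already observed before the statement. That every framing arises as some $B(D)$ follows from Proposition~\ref{all:framed:arise} applied to $\Sigma$, which is trivalent because a flow-spine has $\Sigma^{(1)}\neq\emptyset$. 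Thus the correspondence framed links $\leftrightarrow$ flow-diagrams is well-defined and onto, and the real content is to identify the moves.

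Next I would check that each listed move preserves the isotopy class of the framed link. The moves $R_2$, $R_3$, $F_1^{*,*}$, $C_0$, $S_\pm$, $V_*$ are precisely the flow-band moves of Theorem~\ref{main:band:flow:thm} that carry no half-twist points, hence they already preserve $B$. For $U$ of Fig.~\ref{Umove:fig} the point to make is that it leaves both the core link and the framing unchanged: it inserts two curls of opposite sign, and by Proposition~\ref{torus_double_twist:prop} together with Proposition~\ref{all:framed:arise} a single curl changes the framing by the same full twist $\tau^{\pm1}$ as two equal-sign half-twist points, so the two opposite curls of $U$ cancel. Equivalently, $U$ is realizable as $\widetilde{R}_1^{+}$ followed by $\widetilde{R}_1^{-}$ with the four resulting half-twist points cancelled in pairs by $Z$, exhibiting $U$ as a consequence of band moves.

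For completeness I would argue as follows. Given flow-diagrams $D_0,D_1$ with $B(D_0)$ isotopic to $B(D_1)$, Theorem~\ref{main:band:flow:thm} produces a sequence of flow-band moves $\widetilde{R}_1$, $R_2$, $R_3$, $F_1^{*,*}$, $C_0$, $S_\pm$, $V_*$, $A_\pm$, $B$, $Z$ joining $D_0$ to $D_1$, through flow-band-diagrams $\widetilde{E}_i$ each representing a cylinder band isotopic to $B(D_0)$. The key device is the dictionary ``two equal-sign half-twist points $=$ one curl'' noted above: using it I would replace each $\widetilde{E}_i$ by a half-twist-point-free flow-diagram $E_i$ with $B(E_i)$ isotopic to $B(\widetilde{E}_i)$, so that all $E_i$ carry the \emph{same} framing integer on each component (that of $D_0=D_1$). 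Under this substitution the ten band moves translate into the framed set: the six moves $R_2,R_3,F_1^{*,*},C_0,S_\pm,V_*$ survive unchanged; $\widetilde{R}_1$, which is a curl compensated by two opposite half-twist points (Fig.~\ref{R1_F2-move-framed:fig}), becomes two curls of opposite sign, i.e.\ exactly $U$; the creation move $Z$ of an opposite half-twist pair becomes either $U$ or a triviality, according to how the pair is grouped into curls; and the sliding moves $A_\pm$ (Fig.~\ref{A-move-framed:fig}) and $B$ become the sliding of a curl past $\Sigma^{(1)}$ or past a crossing, achieved by $F_1^{*,*}$, $R_2$ and $R_3$. The half-twist-point-free versions of the vertex situations are handled as in Fig.~\ref{generatedflowVs:fig}. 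Concatenating the translated steps yields a sequence of the framed moves from $D_0$ to $D_1$.

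The main obstacle I expect is the bookkeeping in this last translation, specifically guaranteeing that a \emph{lone} curl (a single $R_1$, which is excluded because it alters the framing) is never forced upon us. This is exactly what the constancy of the framing integer along the band sequence secures: curls are only ever introduced or removed in framing-cancelling pairs, that is, through $U$, while the net framing recorded by the crossings and curls of $E_i$ stays fixed. Carrying out this accounting carefully, component by component, and matching the local pictures of $A_\pm$, $B$ and $\widetilde{R}_1$ with curl manipulations is the delicate part; once it is in place, the equivalence of the two move sets, and hence the asserted bijection, follows.
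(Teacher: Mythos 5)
Your proposal reaches the statement by a genuinely different route from the paper's. The paper starts from the \emph{link} result (Theorem~\ref{main:flow:thm}): it takes the sequence of link moves joining the two diagrams, neutralizes every $R_1^{\pm1}$ by the device of \emph{frozen curls} (so that only the moves of the statement are ever performed), arrives at a diagram agreeing with $D'$ except for curls confined to a disc and normalized to a single type via Fig.~\ref{curltypes:fig}, and only then invokes the homology-sphere hypothesis: framings of a knot in $M$ are classified by $\matZ$ via intersection with a Seifert surface, a curl shifts this integer by $\pm1$, so the curl counts must agree and the diagrams coincide. You instead start from the \emph{band} result (Theorem~\ref{main:band:flow:thm}) and convert each intermediate flow-band-diagram into a curl-decorated flow-diagram via the dictionary ``two equal-sign half-twist points $=$ one curl''. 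Both routes need the same sublemmas --- that curls can be slid past crossings, edges and vertices by the listed moves, and that opposite curls can be traded and cancelled via $U$ --- which you defer as ``the delicate part'' but which occupy most of the paper's figures here; your deduction stands or falls with them exactly as the paper's does. Two caveats. First, your appeal to Proposition~\ref{all:framed:arise} for surjectivity is off: a flow-spine need not be trivalent ($S^2$ in $S^2\times D^1$ has $\Sigma^{(1)}=\emptyset$), though surjectivity is immediate anyway since curls realize all full twists. Second, your argument never visibly uses that $M$ is a homology sphere, because the band theorem tracks the framing inside the moving tubular neighbourhood, where Proposition~\ref{torus_double_twist:prop} makes it a $\matZ$-torsor; if the move-by-move translation (in particular the global well-definedness of the sign of a half-twist point along a component, which defines your replacement integer, and the endpoint identifications $E_0=D_0$, $E_N=D_1$) is carried out in full, you would obtain the statement for general orientable $M$ --- a stronger conclusion that should be checked with corresponding suspicion rather than asserted in passing.
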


\begin{proof}
Given a framed link in $M$ and a flow-diagram representing its core, we can perform moves $R_1$ on this
diagram to get one that represents the framed link. Of course the listed moves do not change the isotopy
class of the framed link defined by a flow-diagram, so we must prove that flow-diagrams $D,\ D'$ with 
isotopic $B(D),\ B(D')$ are related by the moves.
To do this it will be useful to fix an orientation of $M$ (and hence of $\Sigma^{(2)}$, because
it has an intrinsic transverse orientation in $M$).

Since in particular $L(D)$ and $L(D')$ are isotopic, 
by Theorem~\ref{main:flow:thm} there exists a sequence of moves
$D=D_0\to D_1\to\ldots\to D_N=D'$ with each $D_{k-1}\to D_k$ of type
$R_1$,  $R_2$, $R_3$, $F_1^{*,*}$, $C_0$, $S_\pm$, or $V_*$. 
We now construct diagrams $\overline{D}_k$ and moves $\overline{D}_{k-1}\to\overline{D}_k$
where $\overline{D}_0=D_0$, each $\overline{D}_k$ differs from $D_k$ for the presence of some
\emph{frozen curls}, namely curls drawn inside small shadowed discs contained in the components of $\Sigma^{(2)}$ 
(see the pictures below), and each move $\overline{D}_{k-1}\to\overline{D}_k$ is realized as a combination of 
$R_2$, $R_3$, $F_1^{*,*},\ C_0,\ S_\pm,\ V_*$, so that $B(\overline{D}_{k-1})$ and $B(\overline{D}_k)$ are isotopic in $M$.
The construction takes place inductively as follows once $\overline{D}_{k-1}$ is constructed:
\begin{itemize}
\item If $D_{k-1}\to D_k$ is a positive $R_1$ move we can assume it takes place 
in $\overline{D}_{k-1}$ away from the existing frozen curls; 
then $\overline{D}_{k-1}\to\overline{D}_k$ is the move shown in Fig.~\ref{frozen-positive-R1:fig}:
\begin{figure}
\figfatta{frozen-positive-R1}
{\includegraphics[scale=0.6]{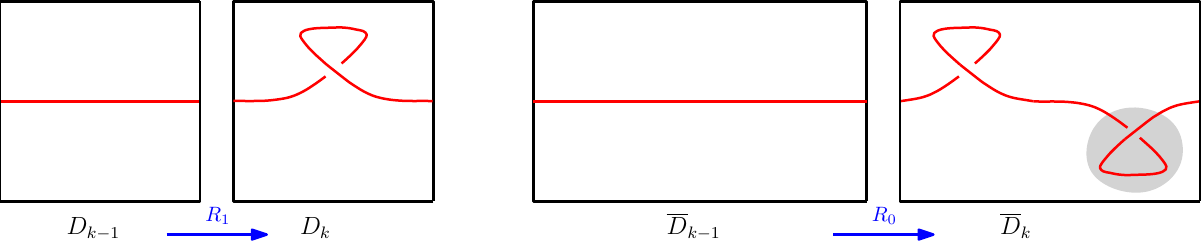}}
{Replacing a positive $R_1$ move by a move $R_0$.\label{frozen-positive-R1:fig}}
\end{figure}
the existing frozen curls of $\overline{D}_{k-1}$ persist in $\overline{D}_k$ and two positive $R_1$ moves 
are performed, the original one and one creating a curl which is declared frozen. For later purpose we call this move $R_0$ 
and recall the easy fact that it is realized by $R_2$'s and $R_3$'s;
\item If $D_{k-1}\to D_k$ is a move $R_1^{-1}$ then $\overline{D}_k$ coincides with $\overline{D}_{k-1}$, except that 
the the curl that $R_1^{-1}$ aimed at destroying is kept frozen, see Fig.~\ref{frozen-negative-R1:fig};
\begin{figure}
\figfatta{frozen-negative-R1}
{\includegraphics[scale=0.6]{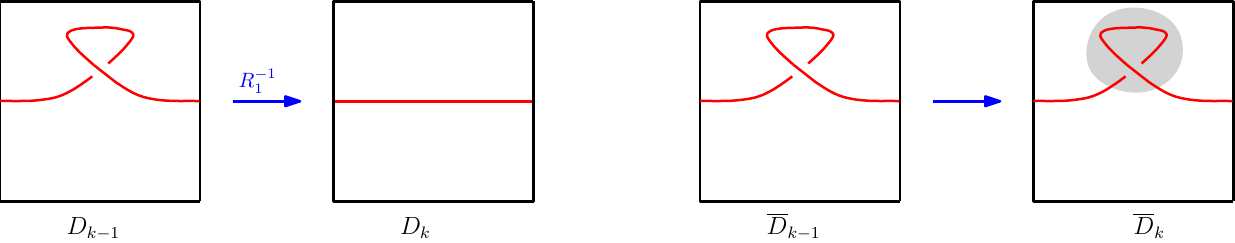}}
{Replacing an $R_1^{-1}$ move.\label{frozen-negative-R1:fig}}
\end{figure}
\item If $D_{k-1}\to D_k$ is not $R_1^{\pm1}$ then $\overline{D}_{k-1}\to\overline{D}_k$
is a sequence of moves $R_2$, $R_3$, $F_1^{*,*},\ C_0,\ S_\pm,\ V_*$ necessary to realize 
$D_{k-1}\to D_k$ by first sliding the frozen curls in $\overline{D}_{k-1}$
away from the area where $D_{k-1}\to D_k$ takes place and then performing it. Two 
examples are shown in Fig.~\ref{sliding-frozen-curls:fig}
\begin{figure}
\figfatta{sliding-frozen-curls}
{\includegraphics[scale=0.6]{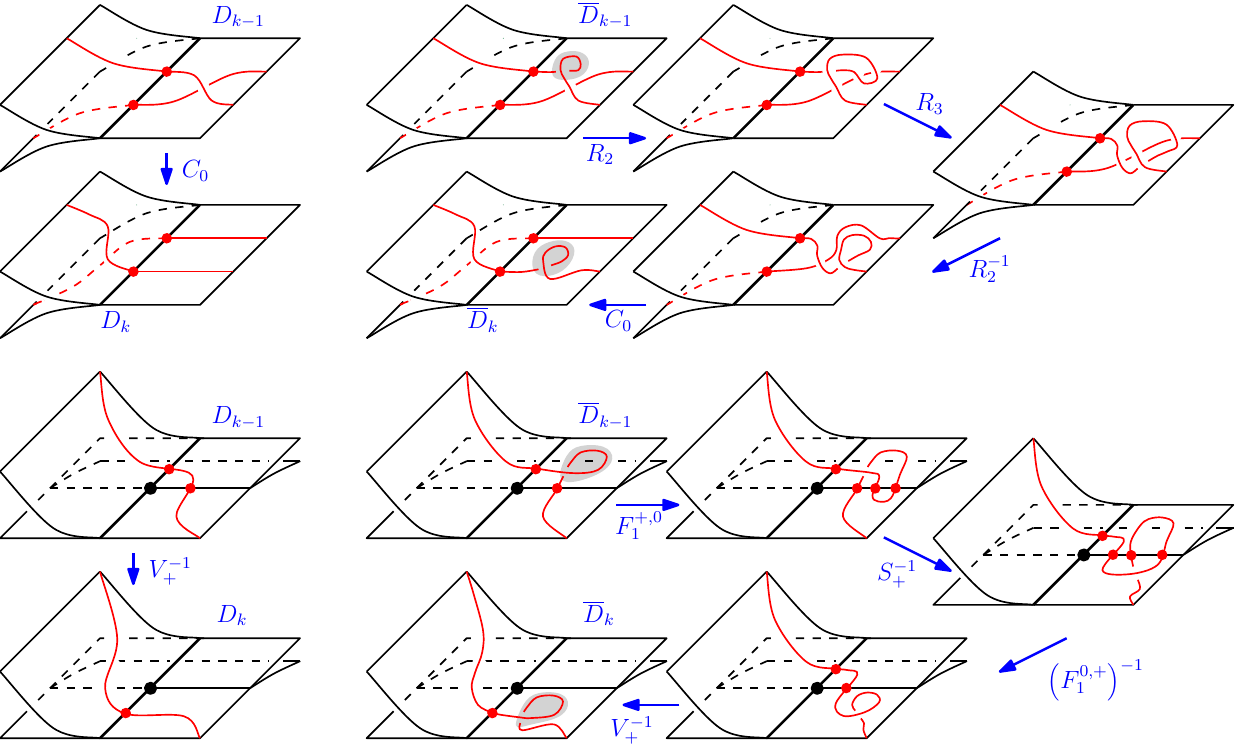}}
{Replacing a move $C_0$ or $V_+^{-1}$.\label{sliding-frozen-curls:fig}}
\end{figure}
\end{itemize}

At the end of this process we have a flow-diagram $D''=\overline{D}_N$ obtained from $D$
by moves as in the statement and coinciding with $D'$ except for the presence of some curls
(we can now disregard the fact that they are frozen), so $B(D')$ and $B(D'')$ are 
isotopic in $M$. In the rest of the proof for the 
sake of simplicity we assume that $L(D'')=L(D')$ is a knot $K$ (in the general case, 
one only needs to repeat the argument for each component of the link).
Moreover we will change $D''$ and $D'$ via moves as in the statement without changing their names.
To begin, using the moves $R_2$, $R_3$, $F_1^{*,*}$, $C_0,\ S_\pm$ we can let the curls slide 
past crossings and intersections with $\Sigma^{(1)}$, until $D''$ and $D'$ coincide outside a small disc
contained in a component of $\Sigma^{(2)}$, where they both consist of an arc with some curls. 
Note that locally, once an orientation is chosen for $K$ there are $4$ types of curls $c_{\pm}^r$ and $c_{\pm}^\ell$, shown in
Fig.~\ref{curltypes:fig}.
\begin{figure}
\figfatta{curltypes}
{\includegraphics[scale=0.6]{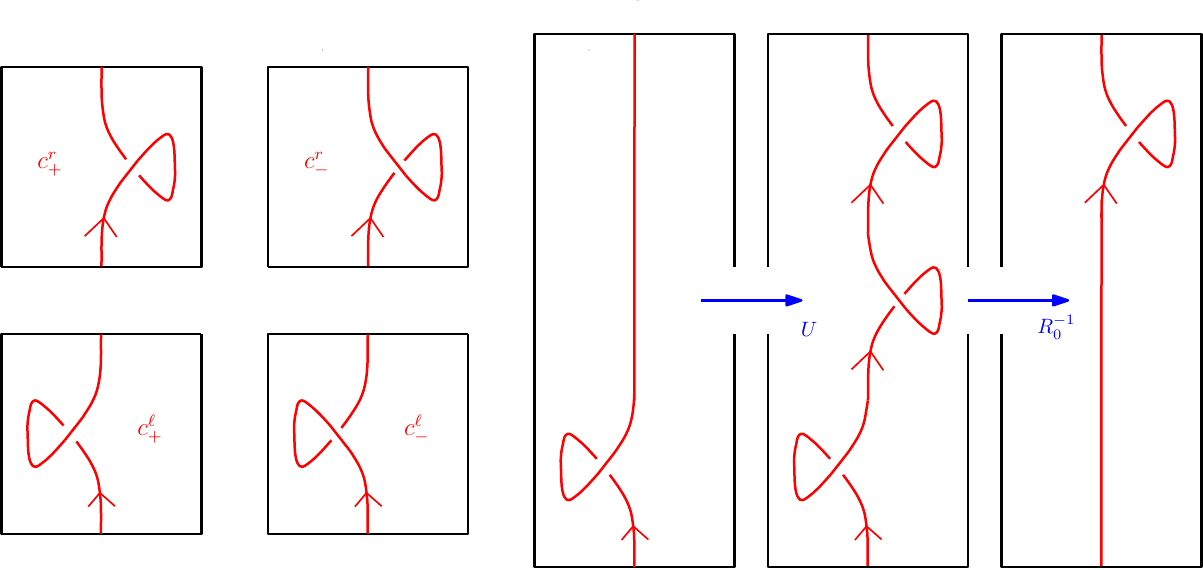}}
{Local curl types, and how to trade a left curl with a right one.\label{curltypes:fig}}
\end{figure}
Moreover, as shown in the same figure, each $c_+^\ell$ can be transformed via $U$ and $R_0$ (whence $R_2$ and $R_3$) into a $c_+^r$, 
and similarly each $c_-^\ell$ into a $c_-^r$. So we can assume that $D''$ and $D'$ only contain curls $c_+^r$ and $c_-^r$, but
if both types occur then two consecutives ones do, and we can cancel them by $U^{-1}$. Eventually, we get $D''$ and $D'$ that
both contain only curls $c_+^r$ or only curls $c_-^r$.

We can now conclude that $D''$ and $D'$ are the same diagram using the fact that $M$ is a homology sphere, because then
the framings of $K$ in $M$ are parameterized by $\matZ$, where $B$ corresponds to $n\in\matZ$ if
$n$ is the algebraic intersection number between one of the components of $\partial R$ and an oriented surface $S$ bounded by $K$,
with $K$ oriented as $\partial S$ and $\partial R$ oriented parallel to its core $K$. 
Adding to a flow-diagram on $\Sigma$ a curl $c_{\pm}^r$ the integer associated to the diagram changes by $\pm1$,
which indeed implies that $D''$ and $D'$ coincide because $B(D')$ and $B(D'')$ are isotopic in $M$.
\end{proof}

\begin{rem}
\emph{The above argument applies in particular to the case where $\Sigma$ is $S^2$ as a flow-spine of $N=S^2\times D^1$,
viewed as $S^3$ minus two open balls, with flow parallel to $D^1$. However on $S^2$ the move $U$ is generated by $R_2$ and $R_3$,
so we recover the known fact framed links in $S^3$ correspond to link diagrams on $S^2$ up to the moves $R_2$ and $R_3$.}
\end{rem}



\vspace{.5cm}

\noindent
Dipartimento di Matematica, 
Universit\`a di Pisa\\
Largo Bruno Pontecorvo, 5 --
56127 PISA -- Italy\\
\texttt{petronio@dm.unipi.it}

\end{document}